\newcommand{\R}{\mathbb{R}}
\newcommand{\im}{\mathrm{im\ }}
\renewcommand{\ker}{\mathrm{ker\ }}
\newcommand{\Z}{\mathbb{Z}}
\def\blfootnote{\xdef\@thefnmark{}\@footnotetext}
\newcommand{\xc}[1]{\vspace{.1cm}

\noindent {\em #1} }
\newcommand{\mab}[1]{\vspace{.1cm}

\noindent {\textbf{#1 }}} 
\newtheorem{definition}{Definition}
\newtheorem{theorem}{Theorem}
\newtheorem{proposition}[theorem]{Proposition}
\newtheorem{lemma}{Lemma}
\title{Dynamic Global Feedback Stabilization: why do the twist?}
\date{}
\author{Mohamed-Ali Belabbas and Jehyung Ko} 
\thanks{M.-A. Belabbas and Jehyung Ko are with  Coordinated Science Laboratory, University of Illinois, Urbana-Champaign. Email: \texttt{belabbas@illinois.edu}}
\begin{document}
\maketitle

\begin{abstract}              We investigate  global dynamic feedback stabilization from a topological viewpoint. In particular, we consider the   general case of dynamic feedback systems, whereby the total  space (which includes the state space of the system and of the controller) is a fibre bundle, and derive conditions on the topology of the bundle that are necessary for various notions of global stabilization to hold. This point of view highlight the importance of distinguishing trivial bundles  and twisted bundles  in the study of global dynamic feedback stabilization, as we show that dynamic feedback defined on a twisted bundle can stabilize systems that dynamic feedback on trivial bundles cannot. 
\end{abstract}

%===============================================================================
\section{Introduction}

A well-known necessary condition for global {\em state feedback} stabilization of a control system is that the state-space of the system be contractible (see below for a precise definition). This condition simply reflects the fact that the flow of a globally stable system induces a contraction to the (necessarily unique) equilibrium of the system. 
The case of {\em dynamic} feedback stabilization, which we address in this paper, is far more involved. In addition to the obvious fact that one can design the controller space and dynamics, one can also design {\em topology} of the state-space of the {\em closed-loop} system, i.e., how the controller space is ``attached'' to the state-space of the system. This second aspect, which is very often overlooked, will be shown here to strongly impact what can be achieved by dynamic feedback.

One of the main reasons why the design of the topology of the global system is less explored stems from the following fact: Let $M$ be the state-space of the system and $U$ the controller space; a dynamic feedback control system is often simply written  as
\begin{align}\label{eq:dyn1}
\begin{cases}
\dot{x}=f(x,u)\\
\dot{u}=g(x,u)
\end{cases}
\end{align}
and the goal of the designer is to derive an appropriate $g$ for the control task at hand. However, this {\em implicitly} assumes that the global state-space is $E=M \times U$, and is thus a {\em trivial} fibre bundle, where trivial specifically refers to the fact that $E$ is (globally) the product of $M$ and $U$. In order to describe the dynamics over a non-trivial bundle (or twisted bundle), one needs to either embed it in a higher dimensional state-space, or to cover the base space with charts and write dynamics of the type~\eqref{eq:dyn1} for each chart while verifying some compatibility conditions on charts' overlaps. We refer the reader to~\cite{bredon2013topology,boothby1986introduction} for more details.

There is scant literature on the topology of global dynamic feedback. But among related works, we mention the seminal works~\cite{brockett1976nonlinear}, where one can find more extended discussion on the fibre bundle structure of feedback control systems,~\cite{brockett1983asymptotic} where a topological obstruction to local state-feedback stabilization was exhibited. A similar condition was shown independently in~\cite{krasnoselskij1984geometrical}. The case of global state-feedback stabilization was treated in~\cite{bhat2000topological}, where an explicit proof of the fact that state-feedback global stabilization defines a contraction can be found. Obstructions to local stabilization to a submanifold, extending Brockett's result, have been derived in~\cite{mansouri2007local,mansouri2010topological}. The use of topological methods, more precisely homology, has also been fruitful in robotics~\cite{leve2020homological}. Finally, we mention the recent work~\cite{kvalheim2022necessary} where issues of state feedback stabilization to compact subsets  while avoiding regions of the state-space are addressed, and~\cite{baryshnikov2023topological}, where the author studies the complexity of the subset of $M$ over which a stabilizing state feedback has to be discontinuous.  For a broader view topological ideas in control, we  refer to~\cite{leve2024}.

Throughout this paper, the state-space, control space and total space of a feedback system are finite-dimensional smooth manifolds, and CW complexes.
\section{On dynamic feedback and fibre bundles}

We gather the definitions that are needed to state our results, some being standard in the literature and other specific to our problem. 

For our purpose, the total space $E$ for dynamic feedback control system is a fibre bundle defined as follows
\begin{definition}A smooth fibre bundle is the data $(E,p,M,U)$ where $E, M$ and $U$ are smooth manifolds, $p:E \to M$ is a smooth projection map onto $M$ so that for every $x \in M$, there is an open neighborhood $V_x \ni x$ and a diffeomorphism $\varphi_x:p^{-1}(V_x) \to V_x \times U$ so that the diagram
\begin{equation*}
\begin{tikzcd}[row sep=huge]
p^{-1}(V_x) \subseteq E \arrow[r,"\varphi_x"] \arrow[d,swap,"p"] &
V_x \times U  \arrow[dl,"p_1"] 
\\
V_x  & & 
\end{tikzcd}
\end{equation*}
commutes, where $p_1$ is the projection onto the first component. We also denote the bundle as  $ p: E \to M$ with fibre $U$.
\end{definition}
\begin{figure}
\centering
\begin{tikzpicture}[scale=0.75]
\begin{axis}[
    hide axis,
    view={40}{40},
    zmin=-.05,
    zmax=.05,
]
\addplot3 [
    surf, shader=faceted interp,
    point meta=x,
    colormap/cool,
    samples=40,
    samples y=3,
    z buffer=sort,
    domain=-180:180,
    y domain=-0.1:0.1
] (
    {2*cos(x)},
    {2*sin(x)},
    {0.125*y});

\addplot3 [
    samples=50,
    domain=-135:35, 
    samples y=0,
    thick
] (
    {2*cos(x)},
    {2*sin(x)},
    {0});
\addplot3 [
    samples=50,
    domain=50:210, 
    samples y=0,
    thick
] (
    {2*cos(x)},
    {2*sin(x)},
    {0});
\addplot3 [
    samples=50,
    domain=-.1:.1, 
    samples y=3,
    very thick,
    red
] (
    {2*cos(-97)},
    {2*sin(-97)},
    {0.125*y});
\node[label={220:{\color{red} \Large x}},circle,fill,red,inner sep=1pt] at (axis cs:-1.8,-.14) {};

\end{axis}
\begin{scope}[xshift=7.cm]
\begin{axis}[
    hide axis,
    view={40}{40},
    zmin=-.45,
    zmax=.45,
]
\addplot3 [
    surf, shader=faceted interp,
    point meta=x,
    colormap/cool,
    samples=40,
    samples y=3,
    z buffer=sort,
    domain=-180:180,
    y domain=-0.2:0.2
] (
    {(1+0.5*y*cos(x/2)))*cos(x)},
    {(1+0.5*y*cos(x/2)))*sin(x)},
    {0.5*y*sin(x/2)});

\addplot3 [
    samples=50,
    domain=-142:205, 
    samples y=0,
    thick
] (
    {cos(x)},
    {sin(x)},
    {0});
\addplot3 [
    samples=50,
    domain=-.2:.2,
    samples y=3,
    very thick,
    red
] (
    {(1+0.5*y*cos(-97/2)))*cos(-97)},
    {(1+0.5*y*cos(-97/2)))*sin(-97)},
    {0.5*y*sin(-97/2)});
\node[label={270:{\color{red} \Large x}},circle,fill,red,inner sep=1pt] at (axis cs:-.91,-.03) {};
\end{axis}
\end{scope}
\end{tikzpicture}
    \caption{We illustrate the topology of dynamic feedback for $M=S^1$ (unit circle, in black) and $U=[-1,1]$ (unit interval). {\it Left:} The total space is the trivial bundle $E=M \times U$. {\it Right:} The total space is the Mobius strip, which is a bundle over $S^1$ with fibre $[-1,1]$. 
Stabilizing a point $x \in M$ (red dot) entail stabilizing the fibre $p^{-1}(x)$ in $E$ (red segment).}
    \label{fig:mobiusstri}
\end{figure}
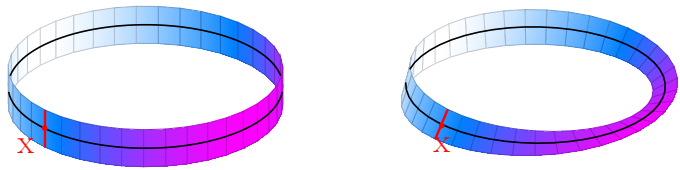
The functions $\varphi_x$ are called local trivialization of the fibre bundle. If we can take $V_x = M$, then  $E$ is diffeomorphic to $M \times U$ via $\varphi_x$ and the bundle is called {\em trivial}.

Equipped with this definition, we can now define the kind of general dynamic feedback control system we address in this note. 

\begin{definition}[Dynamic feedback system] Let $M$ be a smooth manifold, and $f(\cdot,\cdot): M \times U \to TM$ be a smooth $u$-dependent vector field so that $f(x,u) \in T_xM$ for all $u \in U$. Then, a {\em dynamic feedback} for $f(x,u)$ is given by a fibre bundle $p: E \to M$  with fibre $U$ and a vector field $f_{cl}$ on $E$ which can be  written as
\begin{align}\label{eq:dyn2}
\begin{cases}
\dot{x}=f(x,u)\\
\dot{u}=g(x,u)
\end{cases}
\end{align}
using the trivializations $\varphi_x$.
\end{definition}

Consider the system $\dot x = f(x(t))$ on a manifold $M$ and let $x \in M$ be an equilibrium of this system. We recall the following standard definition.
\begin{definition}\label{def:stab}
An equilibrium $x^*\in M$ is said to be Lyapunov stable if for all neighborhoods $V_1$ of $x^*$, there exists a neighborhood $V_2$ of $x^*$ so that if $x_0 \in V_2$, $e^{tf}x_0 \in V_1$, where $e^{tf}x_0$ is the solution at time $t$  of the system initialized at $x_0$. Furthermore, $x^*$ is said to be asymptotically stable if it is Lyapunov stable and there exists a neighborhood $V_3$ so that  $\lim_{t \to \infty} x_0 = x$ for all $x_0 \in V_3$. Finally, $x^*$ is globally asymptotically stable if it is asymptotically stable and $V_3$ can be taken  equal to $M$.
\end{definition}
The above definition can easily be extended to stabilization to a submanifold of $M$.

\mab{Dynamic feedback stabilization} When dealing with {\em dynamic} feedback stabilization on $p:E \to M$, one needs to refine the notion of stability. Indeed, using Definition~\ref{def:stab} to stabilize at a point $z^* \in E$ so that $p(z^*)=x^*$, though it achieves the goal of stabilization of $x^*$, can be too strong a requirement as it forces the controller to reach a particular state as well. Said otherwise, one needs to account for the fact that only stabilization at a particular $x^* \in M$ can matter, and the state of the controller could be irrelevant when the desired state for the system is reached.  Hence, when dealing with dynamic feedback, two notions of global stabilization arise.

\xc{Strong global stabilization:} A control system on $p:E \to M$ is strongly globally stable if $z^* \in E$ is a globally stable equilibrium.
\xc{Weak global stabilization:} A control system on $p:E \to M$ is weakly globally stable if $U^*:=p^{-1}(x^*) \in E$ is a globally stable equilibrium.

We will also account for the fact that very often, while global stabilization is impossible, removing a single point from the state-space renders it possible (for a deeper exploration of this phenomenon, we refer to~\cite{baryshnikov2023topological}. We will refer to this as $1$-point almost global stabilization. We emphasize that for $1$-point almost global stabilization, we require convergence from all points of $E$  except from $z_1 \in E$; this provides a much stronger form of stability than the usual almost global stability on $M$. Indeed, in the latter case, we require convergence to $x^*$ from all initial states except from a point $x_1 \in M$; this  translates into requiring convergence on $E$ save from $p^{-1}(x_1)$, which is much larger than $z_1$, unless the control space $U$ is of finite cardinality. 

We gather these notions in the definition

\begin{definition}
    Consider the control system $\dot x = f(x,u)$ on $M$. Then, a dynamic feedback closed loop system on $p: E \to M$
    \begin{enumerate} 
    \item strongly stabilizes $x^*$ if it stabilizes some $z^* \in p^{-1}(x^*)$.
    \item weakly stabilizes $x^*$ if it stabilizes the fibre  $p^{-1}(x^*)$.
    \item {\em 1-point almost globally} weakly/strongly stabilizes $x^*$ if there exist one point $z_1 \in E$ so that $x^*$ is weakly/strongly globally stabilized over $E -\{z_1\}$.
    \end{enumerate}
\end{definition}
See Figure~\ref{fig:mobiusstri} for an illustration.

\section{Statement of the main results.}

% \begin{align*}
% p:E\rightarrow M\textrm{ with fibre }U\textrm{ or }U\rightarrow E\xrightarrow[]{p}M
% \end{align*}
Our main results are

\begin{theorem}\label{th:main1}
Consider a control system $\dot x=f(x,u)$ with state-space $M$ and control space $U$. Then, $x^* \in M$ is weakly/strongly globally stabilizable by dynamic feedback only if $M$ is contractible.	
\end{theorem}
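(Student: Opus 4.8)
Here is how I would prove Theorem~\ref{th:main1}.

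The plan is to run the closed-loop flow on the total space $E$, use it to deformation retract $E$ onto the attractor lying over $x^*$, and then feed the resulting homotopy equivalence into the long exact sequence of the bundle $p\colon E\to M$. The weak and strong cases are handled separately: a strong stabilizer need not make the fibre $p^{-1}(x^*)$ Lyapunov stable (already a planar spiral toward the origin fails to make a line through the origin a stable set), so the strong case does not reduce formally to the weak one.

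\emph{Step 1 (the flow retracts $E$ onto the attractor).} Global asymptotic stability makes all forward orbits bounded, so the closed-loop flow $\Phi_t$ is complete for $t\ge 0$. In the strong case $z^*$ is a globally asymptotically stable equilibrium, and by the standard contraction argument recalled in the introduction (see~\cite{bhat2000topological}) its basin, here all of $E$, deformation retracts onto $z^*$; thus $E$ is contractible. In the weak case $A:=p^{-1}(x^*)$ is a globally asymptotically stable set; since $A$ is closed and stable it is forward invariant, and an asymptotically stable set is a deformation retract of its basin of attraction, so $A$ is a deformation retract of $E$ and the fibre inclusion $\iota\colon A\hookrightarrow E$ is a homotopy equivalence. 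One also notes that $p\circ\Phi_t\colon E\to M$ converges, uniformly on compact sets, to the constant map at $x^*$, so $p$ is null-homotopic; this is the intuitive reason $M$ must be contractible, but the argument below proceeds through homotopy groups.

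\emph{Step 2 (transfer to $M$).} First, $E$ is connected --- a point in a component disjoint from the attractor could never converge to it --- hence $M=p(E)$ is connected; and $M$ is a CW complex, so by Whitehead's theorem it suffices to prove $\pi_n(M)=0$ for all $n\ge 1$. Since $p$ is a fibre bundle it is a Serre fibration, so choosing a basepoint in $A\subseteq p^{-1}(x^*)$ we have the exact sequence
\[
\cdots\longrightarrow\pi_n(A)\xrightarrow{\iota_*}\pi_n(E)\xrightarrow{p_*}\pi_n(M)\xrightarrow{\partial}\pi_{n-1}(A)\longrightarrow\cdots .
\]
In the weak case $\iota_*$ is an isomorphism in every degree, so exactness forces $p_*=0$, hence $\partial$ injective, and since $\iota_*$ is injective in degree $n-1$ as well, exactness gives $\partial=0$; thus $\pi_n(M)=0$ for every $n$. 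In the strong case $\pi_*(E)=0$, so the sequence gives $\pi_n(M)\cong\pi_{n-1}(U)$ for $n\ge 2$ and a pointed bijection $\pi_1(M)\cong\pi_0(U)$; equivalently the fibre $U$ is homotopy equivalent to the loop space $\Omega M$. Assuming the control space $U$ is connected (as in our examples; and when $U$ is contractible, the usual situation, one reads off directly that $p$ is a homotopy equivalence and $M\simeq E\simeq\mathrm{pt}$), $M$ is simply connected, and a finite-dimensional manifold cannot have the homotopy type of $\Omega M$ unless $M$ is contractible, because the loop space of a simply connected non-contractible finite complex has nonzero homology in arbitrarily high degrees (Serre). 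In either case $M$ is contractible.

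\emph{Where the work is.} The two delicate points are: the ``asymptotically stable set is a deformation retract of its basin'' statement in Step 1 when the control space, hence $A$, is non-compact, which requires a properness argument beyond the classical compact case; and the strong case of Step 2, where the implication ``$E$ contractible $\Rightarrow$ $M$ contractible'' genuinely uses the bundle structure and fails for disconnected fibres --- the covering $\R\to S^1$ with $\dot z=-z$ is a bona fide closed loop with a globally asymptotically stable equilibrium over the non-contractible $S^1$ --- so a mild hypothesis on the control space (connected, or as usual contractible) is what makes that step go through cleanly.
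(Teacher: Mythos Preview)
Your proposal is correct and follows essentially the same route as the paper: reduce via the flow to a deformation retraction of $E$ onto a fibre (weak case) or a point (strong case), then in the weak case use the long exact sequence of the fibration together with $\iota_*$ being an isomorphism to kill $\pi_*(M)$ and conclude by Whitehead, and in the strong case identify $U\simeq\Omega M$ and invoke Serre's result that the loop space of a simply connected non-contractible space has nonvanishing homology in arbitrarily high degrees, contradicting $\dim U<\infty$. Your caveat about connectedness of $U$ is well-taken --- the paper silently assumes it in the proof of Lemma~\ref{lem:simplyconnected} --- and your remark that the retraction onto a non-compact fibre in Step~1 needs more care than the paper's one-line Proposition~\ref{prop:dfcandgas} is accurate; but neither changes the overall strategy, which matches the paper's.
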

This result states that dynamic feedback, even with the choice of topology of the global state-space, cannot globally stabilize an equilibrium, in a weak or strong sense, unless the state-space of the system is itself contractible. For the second result, we assume that $M$ is moreover closed, which in light of Lemmas~\ref{lem:contrbundletrivial} and~\ref{lem:closednotcontr} is the only interesting case.

\begin{theorem}\label{th:main2}
Consider a control system $\dot x=f(x,u)$ with state-space $M$, a closed finite-dimensional smooth manifold, and control space $U$, a finite-dimensional smooth manifold. Then, $x^* \in M$ is weakly/strongly $1$-point globally stabilizable by dynamic feedback only if the closed-loop system has a total space $E$ which is a {\em nontrivial} bundle.
\end{theorem}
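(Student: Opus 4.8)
The plan is to argue by contradiction. Suppose the closed-loop total space is trivial, $E=M\times U$, while $x^*$ is nonetheless $1$-point weakly or strongly stabilizable; I will derive a contradiction with the non-contractibility of $M$. I work throughout with $\dim U\ge 1$, which is the implicit setting of genuine dynamic feedback — with a one-point fibre $E=M$ is unavoidably trivial and the statement must be read with $\dim U\ge1$. Fix the exceptional point $z_1=(x_1,u_1)\in E$, let $\Phi_t$ be the closed-loop flow on $E\setminus\{z_1\}$ (forward complete, since the system is globally asymptotically stable there), and let $A\subseteq E\setminus\{z_1\}$ be the attracting set: $A=\{z^*\}$ in the strong case, and $A=p^{-1}(x^*)\cap(E\setminus\{z_1\})$ in the weak case (an open, hence smooth, submanifold of the fibre, closed as a subset of $E\setminus\{z_1\}$, and forward invariant). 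In both cases $A$ is a smooth submanifold of $E\setminus\{z_1\}$, closed, nonempty, and — this is the point that will be used — $p(A)=\{x^*\}$.

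The key step is to show that the inclusion of the compact slice $K:=M\times\{u_0\}\subseteq E\setminus\{z_1\}$, where $u_0\ne u_1$ (possible since $\dim U\ge1$), is homotopic, inside $E\setminus\{z_1\}$, to a map that factors through $A$. Granting this, composing with $p\colon E\setminus\{z_1\}\to M$ exhibits $p|_K$ as homotopic to a map with image in $p(A)=\{x^*\}$, so $p|_K$ is null-homotopic. But $p|_K\colon M\times\{u_0\}\to M$ is the canonical diffeomorphism $(x,u_0)\mapsto x$, so $M$ is contractible — contradicting Lemma~\ref{lem:closednotcontr}, since $M$ is closed of positive dimension. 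The hypothesis ``$M$ closed'' enters precisely twice here: to make $K$ compact, and to make $M$ non-contractible.

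To prove the key step, fix a tubular neighbourhood $V$ of $A$ in $E\setminus\{z_1\}$ with deformation retraction $r\colon V\to A$, and, by Lyapunov stability of $A$, a neighbourhood $V'$ of $A$ with $\bigcup_{s\ge0}\Phi_s(V')\subseteq V$. Global attractivity gives, for each $z\in K$, a time $T_z$ with $\Phi_{T_z}(z)\in V'$; by continuity of $\Phi_{T_z}$ this persists on a neighbourhood of $z$, so compactness of $K$ yields a finite subcover and a single time $T=\max_i T_{z_i}$ with $\Phi_T(K)\subseteq V$, using $\Phi_T(z)=\Phi_{T-T_{z_i}}(\Phi_{T_{z_i}}(z))\in\bigcup_{s\ge0}\Phi_s(V')\subseteq V$ on each piece. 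Then $s\mapsto\Phi_{sT}|_K$, $s\in[0,1]$, is a homotopy in $E\setminus\{z_1\}$ from $\mathrm{incl}_K$ to $\Phi_T|_K$, and the retraction $r$ shows $\Phi_T|_K$ is in turn homotopic to $(\mathrm{incl}_A)\circ(r\circ\Phi_T|_K)$, which factors through $A$.

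The step I expect to be most delicate is exactly this upgrade from pointwise convergence $\Phi_t(z)\to A$ to the uniform-on-$K$ trapping $\Phi_T(K)\subseteq V$: it is where compactness of $K$ (hence ``$M$ closed'') is indispensable and where one genuinely needs Lyapunov stability, not merely attractivity, and it is also where the weak case needs a little care, since $A$ may be non-compact and one must still produce a tubular neighbourhood missing $z_1$. A more computational route is available — since $E\setminus\{z_1\}$ is homotopy equivalent to $E$ with an open $N$-ball removed ($N=\dim E$), $E$ is, up to homotopy, obtained from $E\setminus\{z_1\}$ by attaching a single $N$-cell, and for $E=M\times U$ this collides with the Künneth formula and the cup-product structure of a nontrivial product — but the homotopy argument above is cleaner and sidesteps any finiteness hypotheses on the homology of $U$.
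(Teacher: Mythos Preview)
Your proof is correct and follows a genuinely different route from the paper's. The paper recasts the statement topologically (Theorem~\ref{theorem:MaintheoremB}) and then argues by homological case analysis: it splits into $\dim U\ge 2$, $U=\R$, and $U=S^1$ (and, in the weak $S^1$-case, further into orientable and non-orientable $M$), invoking the K\"unneth formula, local homology, Mayer-Vietoris, and a lemma forcing $M$ to be an integral homology sphere. Your argument instead exploits the one feature that triviality of $E=M\times U$ gives directly: a global section $\sigma(x)=(x,u_0)$ avoiding $z_1$ (here $\dim U\ge1$ is used). Flowing the compact slice $K=\sigma(M)$ into a tubular neighbourhood of the attractor and projecting back to $M$ shows $\mathrm{id}_M$ is null-homotopic, contradicting Lemma~\ref{lem:closednotcontr}. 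This is considerably shorter, treats the weak and strong cases uniformly, and uses no homology whatsoever. The only place one should be slightly careful is the uniform trapping step for non-compact $A$ in the weak case, but since $E$ is trivial one can take the explicit product neighbourhood $V=W\times U$ with $W\ni x^*$ contractible and $x_1\notin W$, so the tubular neighbourhood and its retraction are concrete. Note also that your argument, once the flow is replaced by the deformation retraction furnished by Proposition~\ref{prop:dfcandgas}, in fact proves the paper's purely topological Theorem~\ref{theorem:MaintheoremB} as well: restrict the retraction to $K$ and project. What the paper's homological route buys is an independent, flow-free obstruction phrased entirely in terms of $H_*(E)$ versus $H_*(E^1)$, which may be reusable in settings where no section is available; what your route buys is a transparent one-paragraph proof that makes the role of triviality (existence of a section missing $z_1$) completely explicit.
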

%This theorem shows that when designing dynamic feedback systems, the topology of the closed-loop system has a major impact on the kind of performance one should expect from the controller. 
This result highlights a major gain to be had by considering non-trivial bundles for dynamic feedback: if one allows for the removal of one point in the {\em total space}---which, as we mentioned earlier, is a weaker requirement than removing one point in $M$---then  a dynamic feedback designed on a trivial bundle $E=M \times U$, cannot $1$-point almost globally stabilize $x_0$, but no such obstruction arises for dynamic feedback designed on a {\em twisted} bundle.

To conclude this section, we  state two well-known facts from topology, whose proofs can be found, e.g., in~\cite{hatcher2005algebraic}, that  better establish the reach of our results. We first recall the definition of {\em deformation retraction}: given a space $M$ and a subspace $A \subseteq M$, a deformation retraction of $M$ to $A$ is a continuous map $r:[0,1] \times M \to M$ so that $r(0,x)=x$ for all $x \in M$, $r(1,x)\in A$ for all $x \in M$ and $r(t,x)=x$ for all $x \in A$,$t \in [0,1]$. The space $M$ is called {\em contractible} if it deformation retracts onto one of its points. 

The first fact is the following.
\begin{lemma}\label{lem:contrbundletrivial}
    If $M$ is contractible, all bundles on $M$ are trivial.
\end{lemma}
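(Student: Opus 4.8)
\textbf{Proof plan for Lemma~\ref{lem:contrbundletrivial}.}

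The plan is to deduce this from the standard homotopy-classification of fibre bundles, together with the fact that a contractible space is homotopy equivalent to a point. The key background fact I would invoke is that, for a reasonable base (a CW complex, which the paper has already stipulated), isomorphism classes of fibre bundles with structure group $G$ and fibre $U$ over $M$ are in bijection with homotopy classes of maps $[M, BG]$ from $M$ into the classifying space of $G$; equivalently, bundles are classified by pullbacks of a universal bundle, and homotopic maps pull back isomorphic bundles. (If one does not want to introduce structure groups explicitly, the cleaner statement to cite is: a fibre bundle over a paracompact base $B \times [0,1]$ restricting to $P$ over $B \times \{0\}$ is isomorphic to $P \times [0,1]$; this is the homotopy invariance of bundles, e.g. in Hatcher or Husemoller.)

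The argument then runs as follows. Since $M$ is contractible, the identity map $\mathrm{id}_M$ is homotopic to a constant map $c_{x^*}: M \to M$ sending everything to a point $x^* \in M$, via the deformation retraction $r:[0,1]\times M \to M$. Given any bundle $p: E \to M$, write $E = \mathrm{id}_M^* E$. By homotopy invariance of bundle pullbacks, $\mathrm{id}_M^* E \cong c_{x^*}^* E$. But $c_{x^*}^* E$ is the pullback of $E$ along a constant map, which is the trivial bundle $M \times p^{-1}(x^*) = M \times U$. Hence $E \cong M \times U$, i.e. the bundle is trivial.

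To make the homotopy-invariance step self-contained, the one thing I would actually prove (rather than cite) is the following: if $H:[0,1]\times M \to N$ is a homotopy and $q: F \to N$ a bundle, then $H_0^* F \cong H_1^* F$. The standard route is to pull $F$ back to a bundle over $[0,1]\times M$ via $H$, and then show any bundle over $[0,1] \times M$ is isomorphic to the product of its restriction to $\{0\}\times M$ with $[0,1]$; this uses a covering of $M$ by trivializing neighbourhoods, a partition of unity (here paracompactness/the CW hypothesis is used), and gluing of local trivializations via a "clutching along the interval" argument. This is entirely standard and I would simply cite~\cite{hatcher2005algebraic} (or Husemoller, \emph{Fibre Bundles}) for it rather than reproduce it.

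The only mild subtlety — and the step I would flag as the place to be careful — is the smooth versus topological category: the paper works with smooth bundles, whereas the classification-by-homotopy machinery is usually phrased topologically. This is not a real obstruction, since a homotopy can be taken smooth, smooth bundles over $M\times[0,1]$ are still smoothly trivial over the interval direction (the gluing constructions can be carried out with smooth partitions of unity, which exist on smooth manifolds), and a topological bundle isomorphism between smooth bundles here can be upgraded to a smooth one. So the conclusion "$E$ is diffeomorphic to $M\times U$ compatibly with the projections" holds in the smooth category as well, which is what the paper's definition of \emph{trivial} requires.
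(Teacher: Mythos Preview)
Your proposal is correct and is the standard argument: homotopy invariance of pullbacks together with the observation that the pullback along a constant map is trivial. The paper does not actually give a proof of this lemma at all; it simply states it as a well-known fact and refers the reader to~\cite{hatcher2005algebraic}, so your write-up is already more detailed than what the paper provides, and your flagging of the smooth-versus-topological issue is a reasonable extra remark that the paper does not address.
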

From this result, we conclude that when designing dynamic feedback on contractible spaces, we do not have access to the topology of the total space as a design parameter, beyond the choice of the topology of the control space $U$. Said otherwise, if $M$ is contractible, all dynamic feedback will evolve on $M \times U$.
% One can check that if state-space $M$ is contractible, then all bundles over $M$ are trivial by using the covering homotopy theory. 

Second, it follows from definitions that any contractible space is simply-connected, but the converse is not true. The simplest example being the sphere $S^2$ which is neither simply-connected but contractible space. Generally, we have the following result

\begin{lemma}\label{lem:closednotcontr}
Assume that $M$ is a connected compact smooth manifold without boundary. Then $M$ is not contractible.
\label{lemma:compactandcontractible}
\end{lemma}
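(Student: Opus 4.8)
The plan is to derive a contradiction from the nonvanishing of the top-dimensional $\Z/2\Z$-homology of a closed manifold. First I would note a degenerate case that the statement implicitly excludes: a connected compact $0$-dimensional manifold is a single point, which \emph{is} contractible. So write $n = \dim M$ and assume $n \geq 1$; this is the only case that matters for feedback stabilization, since a one-point state space carries no dynamics.

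Now suppose, for contradiction, that such an $M$ (connected, compact, $\partial M = \emptyset$, $n \geq 1$) is contractible. The key input is the standard fact that a connected closed $n$-manifold carries a fundamental class mod $2$; concretely, $H_n(M;\Z/2\Z) \cong \Z/2\Z$, which one can see directly or via Poincar\'e duality with $\Z/2\Z$-coefficients, $H_n(M;\Z/2\Z) \cong H^0(M;\Z/2\Z) \cong \Z/2\Z$ (no orientability hypothesis is needed). In particular $H_n(M;\Z/2\Z) \neq 0$. On the other hand, if $M$ is contractible it is homotopy equivalent to a point, and singular homology is a homotopy invariant, so $H_k(M;\Z/2\Z) = 0$ for all $k \geq 1$; taking $k = n \geq 1$ gives $H_n(M;\Z/2\Z) = 0$, a contradiction. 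Hence $M$ is not contractible.

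The only nontrivial ingredient here is the nonvanishing of $H_n(M;\Z/2\Z)$---equivalently, the existence of a $\Z/2\Z$-fundamental class of a closed manifold---which I would simply cite from~\cite{hatcher2005algebraic} (the section on orientations of manifolds), as the excerpt already anticipates; everything else is formal. An equivalent repackaging of the same argument is mod-$2$ degree theory: the identity map $M \to M$ has mod-$2$ degree $1$, any constant map has mod-$2$ degree $0$, and contractibility would make the identity homotopic to a constant, forcing $1 = 0$ in $\Z/2\Z$. The expected ``main obstacle'' is thus not the logic of the proof but the decision of how much of the fundamental-class machinery to reproduce versus cite; given the intended readership, citing it is the right call.
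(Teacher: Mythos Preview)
Your proof is correct. The paper does not actually prove this lemma: it is stated as one of two ``well-known facts from topology, whose proofs can be found, e.g., in~\cite{hatcher2005algebraic},'' so there is no proof in the paper to compare against. Your choice of $\Z/2\Z$-coefficients is the cleanest route, since it sidesteps the orientability dichotomy that integer coefficients would force (compare the paper's own Lemma~\ref{lem:onhn}, which records $H_n(M;\Z)=\Z$ only in the orientable case). Your remark on the $n=0$ degenerate case is a fair caveat; the paper silently assumes positive dimension throughout.
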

From this result, we conclude, e.g., that when designing dynamic feedback controllers for robotic or satellite systems, whose state spaces contain copies of $SO(3)$ or  $SE(3)$, or formation systems, whose state-space contains a copy of $\mathbb{C}P(n)$, the topology of the total space does matter.

\section{Background on algebraic topology.}
To make the paper as self-contained as possible, we gather some results and notions from algebraic topology that are needed in this paper. We refer the reader to~\cite{may1999concise, hatcher2005algebraic} for more details.
 This section can be safely skipped by anyone with a working knowledge of the topic. 
 Given groups $A_1, A_2,\ldots$, an exact sequence is given by a sequence of homomorphisms $\ell_1:A_1 \to A_2$, $\ell_2:A_2 \to A_3, \ldots$ with the property that
$$
\im \ell_{i} =\ker l_{i+1}
$$ for all $i$. An exact sequence is denoted as
$$
A_1 \xrightarrow{\ell_1} A_2\xrightarrow{\ell_2} A_2 \xrightarrow{\ell_3} \cdots
$$
As an example, note that the exact sequence
\begin{equation}\label{eq:exactiso}
0  \xrightarrow{p } A \xrightarrow{q} B \xrightarrow{r} 0  	
\end{equation}
implies that $A$ and $B$ are isomorphic; indeed, by exactness, $\im p = \ker q$ but since the domain of $p$ is $\{0\}$, its image is also $0$ and $q$ thus has a trivial kernel.  Further, because $r$ maps into the group with single element zero, its kernel is $B$ and by exactness, $\im q = \ker r = B$. Hence, $q$ is both injective and surjective and thus an isomorphism. A less trivial example is the following
\begin{lemma}[Five-lemma]
	Consider the commutative diagram
	$$
\begin{tikzcd}
A_1 \arrow{r}{\ell_1}\arrow[swap]{d}{p_{1}} & A_2 \arrow{r}{\ell_2}\arrow[swap]{d}{p_{2}} & A_3\arrow[swap]{d}{p_{3}}\arrow{r}{\ell_3} &  A_4\arrow[swap]{d}{p_4}\arrow{r}{\ell_4} &A_5\arrow[swap]{d}{p_{5}}\\
B_1 \arrow{r}{m_1} & B_2\arrow{r}{m_2} & B_3 \arrow{r}{m_3} & B_4 \arrow{r}{m_4}& B_5
\end{tikzcd}
$$
where the rows are exact sequences.
Assume that $p_2$ and $p_4$ are isomorphisms, $p_1$ is surjective and $p_5$ is injective. Then $p_3$ is an isomorphism.  
\end{lemma}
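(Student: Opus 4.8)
The plan is to prove the two halves of the conclusion --- injectivity and surjectivity of $p_3$ --- separately by \emph{diagram chasing}, using only commutativity of each square and exactness of the two rows. No further machinery is needed; the sole subtlety is bookkeeping, i.e. tracking which of the four hypotheses on $p_1,p_2,p_4,p_5$ is invoked at each step.

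For injectivity, I would start with $a_3 \in A_3$ satisfying $p_3(a_3)=0$ and show $a_3=0$. Pushing to the right, $p_4(\ell_3(a_3)) = m_3(p_3(a_3)) = 0$, so injectivity of $p_4$ gives $\ell_3(a_3)=0$, and exactness at $A_3$ lets us write $a_3 = \ell_2(a_2)$. Now chase $a_2$: since $m_2(p_2(a_2)) = p_3(\ell_2(a_2)) = p_3(a_3) = 0$, exactness at $B_2$ gives $p_2(a_2) = m_1(b_1)$; surjectivity of $p_1$ gives $b_1 = p_1(a_1)$, and then $p_2(\ell_1(a_1)) = m_1(p_1(a_1)) = p_2(a_2)$, so injectivity of $p_2$ yields $a_2 = \ell_1(a_1)$. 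Finally $a_3 = \ell_2(\ell_1(a_1)) = 0$ by exactness at $A_2$. This half uses only that $p_1$ is surjective and $p_2,p_4$ are injective.

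For surjectivity, I would take $b_3 \in B_3$ and construct a preimage. Surjectivity of $p_4$ produces $a_4$ with $p_4(a_4) = m_3(b_3)$; then $p_5(\ell_4(a_4)) = m_4(m_3(b_3)) = 0$ by exactness at $B_4$, so injectivity of $p_5$ forces $\ell_4(a_4)=0$, and exactness at $A_4$ gives $a_4 = \ell_3(a_3')$ for some $a_3' \in A_3$. Since $m_3(p_3(a_3')) = p_4(\ell_3(a_3')) = p_4(a_4) = m_3(b_3)$, the element $b_3 - p_3(a_3')$ lies in $\ker m_3$, so by exactness at $B_3$ we may write $b_3 - p_3(a_3') = m_2(b_2)$; surjectivity of $p_2$ picks $a_2$ with $p_2(a_2)=b_2$, and then $p_3(a_3' + \ell_2(a_2)) = p_3(a_3') + m_2(p_2(a_2)) = b_3$. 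Hence $a_3 := a_3' + \ell_2(a_2)$ is the desired preimage. This half uses only that $p_2,p_4$ are surjective and $p_5$ is injective.

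I do not expect a genuine obstacle here: the Five-lemma is a pure diagram chase. If any step deserves care, it is the final step of each half --- verifying that $\ell_1(a_1)$ (resp. $a_3' + \ell_2(a_2)$) has the asserted image --- which is precisely where commutativity of the pertinent square is combined with the composite-is-zero relations coming from exactness.
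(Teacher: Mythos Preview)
Your diagram chase is correct and is the standard proof of the Five-lemma. The paper does not actually supply a proof of this statement: it is listed among the background facts from algebraic topology and the reader is referred to \cite{may1999concise,hatcher2005algebraic}, so there is nothing to compare against beyond noting that your argument is exactly the textbook one those references contain.

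One minor remark: you use additive notation (e.g.\ $b_3 - p_3(a_3')$ and $a_3' + \ell_2(a_2)$), which implicitly assumes the groups are abelian. In the paper the lemma is applied to homotopy groups, where $\pi_1$ need not be abelian; the chase still goes through verbatim if one writes the argument multiplicatively, so this is purely cosmetic.
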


We denote by $H_i(M;A)$ the $i$th homology group of $M$ with coefficients in $A$. We will make use of the following fact below.
\begin{lemma}\label{lem:onhn}
Let $M$ be a connected, smooth closed manifold of dimension $n$. Then if $M$ is orientable, $H_n(M;\Z)=\Z$ and it is otherwise zero. \end{lemma}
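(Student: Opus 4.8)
This is a standard fact in algebraic topology --- it is, for instance, Corollary~3.28 in~\cite{hatcher2005algebraic} --- and the plan is to recall the argument through the orientation covering. First I would compute the \emph{local homology} at each point: for $x \in M$, excising the complement of a coordinate ball around $x$ and then using the long exact sequence of a pair together with the homology of spheres gives
\[
H_n(M, M\setminus\{x\};\Z)\ \cong\ H_n(\R^n,\R^n\setminus\{0\};\Z)\ \cong\ \widetilde H_{n-1}(S^{n-1};\Z)\ \cong\ \Z .
\]
A continuously varying choice of generator of these groups is, by definition, an orientation of $M$; the two-element fibres (the two generators $\pm$) glue to the orientation double cover $\pi:\tilde M \to M$, and $M$ is orientable precisely when $\pi$ is trivial, equivalently $\tilde M$ is disconnected, equivalently the monodromy $w:\pi_1(M)\to\{\pm 1\}$ is trivial.

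The crucial input is a fact about compact subsets $A\subseteq M$ (Lemma~3.27 in~\cite{hatcher2005algebraic}): the ``restrict to each point'' homomorphism
\[
r_A:\ H_n(M, M\setminus A;\Z)\ \longrightarrow\ \prod_{x\in A} H_n(M, M\setminus\{x\};\Z)
\]
is injective, and a family $(\alpha_x)_{x\in A}$ is in its image exactly when it is the family of values of a continuous section over $A$ of the orientation covering with fibre $\Z$ (equivalently, of $\tilde M\times_{\Z/2\Z}\Z$, where the nontrivial deck transformation $\tau$ of $\tilde M$ acts on $\Z$ by $r\mapsto -r$). The main obstacle is the proof of this statement: it is established by induction on the complexity of $A$ --- one verifies it directly for $A$ a compact convex subset of a single coordinate chart, extends to finite unions of such sets by Mayer--Vietoris, and reaches an arbitrary compact $A$ by approximating it with such finite unions and passing to a direct limit. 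Everything else is bookkeeping.

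Since $M$ is closed it is compact, so I may take $A=M$ in the previous step. Then $H_n(M, M\setminus M;\Z)=H_n(M,\emptyset;\Z)=H_n(M;\Z)$, and $r_M$ identifies $H_n(M;\Z)$ with the group of continuous sections over $M$ of that covering, i.e.\ with the group of locally constant functions $f:\tilde M\to\Z$ satisfying $f(\tau\tilde x)=-f(\tilde x)$. If $M$ is orientable then $\tilde M$ is two disjoint copies of $M$ interchanged by $\tau$, so $f$ is determined by an arbitrary constant on one copy, giving $H_n(M;\Z)\cong\Z$ (the generator being the fundamental class $[M]$). If $M$ is non-orientable then $\tilde M$ is connected, so $f\equiv c$ is constant and the constraint forces $c=-c$, hence $c=0$ and $H_n(M;\Z)=0$. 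Connectedness of $M$ enters precisely here, in reducing a global section to its value at a single point; the smooth (hence CW) hypotheses on $M$ ensure the homological machinery above applies without restriction. Part~(b) of the cited lemma likewise gives $H_i(M;\Z)=0$ for $i>n$, which we will not need.
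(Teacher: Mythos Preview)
Your argument is correct and follows the standard proof via local homology and the orientation covering, essentially Hatcher's treatment (Lemma~3.27 and Corollary~3.28). Note, however, that the paper does not give its own proof of this lemma: it is simply recorded as a well-known fact from algebraic topology, with the surrounding discussion pointing the reader to standard references such as~\cite{hatcher2005algebraic}. So there is nothing to compare against; your write-up supplies a proof where the paper chose to cite one.
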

 
We denote by $\pi_i(M)$ the $i$th homotopy group of $M$, $i \geq 1$, and $\pi_0(M)$ is the set of connected components of $M$, which by abuse of terminology is referred to as $0$th homotopy group of $M$. We will make use of the following result relating homotopy group and homology groups:
\begin{theorem}[Hurewicz]\label{th:hurewicz}
Let $M$ be a path connected space. Then for all $k \geq 1$, there exists a homomorphism $h_*:\pi_k(M) \to H_k(M;\Z)$. Furthermore, for $k \geq 2$, if $\pi_{i}(M)=0$ for all $1\leq i\leq k-1$, then $h_*:\pi_k(M) \to H_k(M;\Z)$ is an isomorphism.
\end{theorem}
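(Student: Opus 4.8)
The plan is to treat the two assertions separately: first construct the homomorphism $h_*$ for every $k \geq 1$, and then establish that it is an isomorphism under the stated connectivity hypothesis. For the construction, I would represent a class $\alpha \in \pi_k(M)$ by a based map $f\colon (S^k,*) \to (M,x_0)$. By Lemma~\ref{lem:onhn} applied to the sphere, $H_k(S^k;\Z) \cong \Z$, generated by a fundamental class $[S^k]$, and I would set $h_*(\alpha) := f_*[S^k]$, where $f_*$ is the induced map on $H_k$. Well-definedness follows because homotopic maps induce the same homomorphism on homology, so the value depends only on $\alpha$. To see that $h_*$ is a homomorphism, I would use the pinch map $c\colon S^k \to S^k \vee S^k$ defining the group operation on $\pi_k$: since $c_*[S^k] = [S^k] \oplus [S^k]$ in $H_k(S^k \vee S^k) \cong \Z \oplus \Z$, the product $f \cdot g = (f \vee g)\circ c$ satisfies $h_*(f\cdot g) = f_*[S^k] + g_*[S^k]$. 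For $k \geq 2$ the group $\pi_k(M)$ is abelian, matching the abelian target; for $k=1$ the map $h_*$ necessarily factors through the abelianization of $\pi_1$, which is precisely why only existence, and not injectivity, is asserted in that case.

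For the isomorphism statement with $k \geq 2$, I would first invoke CW approximation to replace $M$ by a homotopy-equivalent CW complex, which alters neither $\pi_*$ nor $H_*$. Using the hypothesis that $\pi_i(M) = 0$ for $1 \leq i \leq k-1$, I would then pass, by standard cell-trading, to a CW model with a single $0$-cell and no cells in dimensions $1, \ldots, k-1$. For this model the $k$-skeleton $M^{(k)}$ is a wedge of $k$-spheres $\big(\bigvee_\alpha S^k_\alpha\big)$, the groups $H_i(M)$ vanish for $1 \leq i \leq k-1$, and $H_k(M)$ is the cokernel of the cellular boundary $\partial\colon C_{k+1} \to C_k$. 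The strategy is to show that both $\pi_k(M)$ and $H_k(M)$ are computed by the same presentation, namely the free abelian group on the $k$-cells modulo the relations imposed by attaching the $(k+1)$-cells.

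Concretely, surjectivity follows by representing any class in $H_k(M)$ by a cellular $k$-cycle, which by cellular approximation is the $h_*$-image of a corresponding element of $\pi_k$ assembled from the wedge of spheres. For injectivity I would analyze the $(k+1)$-cells: an element of $\pi_k(M^{(k)})$ dies in $\pi_k(M)$ exactly when it lies in the subgroup generated by the attaching maps, and the identical combinatorial relations cut $H_k(M)$ out of the free abelian group on the $k$-cells. Running the long exact sequences of the pair $(M^{(k+1)}, M^{(k)})$ in homotopy and in homology, connected vertically by $h_*$, and applying the five-lemma then delivers the isomorphism.

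The main obstacle I anticipate is the input at the level of the $k$-skeleton, namely that $\pi_k\big(\bigvee_\alpha S^k_\alpha\big)$ is free abelian on the wedge summands for $k \geq 2$; this is itself the Hurewicz conclusion for a wedge of spheres and must be secured first, typically by an induction on the number of cells together with the Blakers--Massey excision theorem in homotopy, so that the argument is not circular. An attractive alternative that sidesteps the cellular bookkeeping is an induction via the path--loop fibration $\Omega M \to PM \to M$: here $\pi_k(M) \cong \pi_{k-1}(\Omega M)$ and $\Omega M$ is $(k-2)$-connected, so the inductive hypothesis identifies $\pi_{k-1}(\Omega M)$ with $H_{k-1}(\Omega M)$, which the homology spectral sequence of the contractible total space $PM$ identifies with $H_k(M)$ by transgression; there the labor shifts to checking naturality of $h_*$ against the homology suspension, with the simply-connected case $k=2$ handled directly as the base step.
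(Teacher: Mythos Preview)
The paper does not supply its own proof of this statement; it is quoted as background and the reader is referred to Hatcher. There is therefore nothing in the paper to compare your argument against beyond noting that your outline is essentially the standard cellular proof one finds there: construct $h_*$ by pushing forward the fundamental class of $S^k$, pass to a CW model with no cells in dimensions $1,\ldots,k-1$, and compare the homotopy and homology long exact sequences of the pair $(M^{(k+1)},M^{(k)})$ via the five-lemma. You have correctly isolated the one genuinely nontrivial ingredient---that $\pi_k\big(\bigvee_\alpha S^k_\alpha\big)$ is free abelian on the wedge summands for $k\geq 2$---and the need to secure it independently (via homotopy excision / Blakers--Massey) so that the argument is not circular. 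Your alternative via the path--loop fibration is also standard; the paper does use that fibration elsewhere (Lemma~\ref{lem:homotopyloopfiber}), though not to prove Hurewicz.
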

The maps $h_*$ are referred to as Hurewicz maps. See~\cite{hatcher2005algebraic} for a proof. 

Let $x^* \in M$; the pair $(M,x^*)$ is  called a {\em pointed space}. When $x^*$ is clear from the context, we will simply write $M$ for the pointed space $(M,x^*)$. We denote by $PM$ the  {\em path space} of $(M,x^*)$, which is defined as
$$PM:=\{\gamma:[0,1] \to M \mbox{ with }\gamma(1) = x^*\},
$$
where $\gamma$ is continuous. It is equipped with the compact-open topology~\cite{hirsch2012differential}. The {\em loop space} $\Omega M$ of $(M,x^*)$ is the subspace of $PM$ defined as
$$\Omega M:=\{\gamma:[0,1] \to M \mbox{ with }\gamma(0)=\gamma(1) = x^*\}.
$$
One can show that $\pi_i(\Omega M)=\pi_{i+1}(M)$. 

Consider the map $H:[0,1] \times PM \to PM: (s,\gamma(t)) \mapsto \gamma(s+t(1-s))$. For $s=0$, it is the identity on $PM$ and for $s=1$ is it is constant path $x^*$. This map is in fact a deformation retraction of $PM$ onto $x^*$, showing that $PM$ is {\em contractible}.   The loop space and path space of $(M,x^*)$ yield the so-called {\em path space fibration} 
\begin{equation}\label{eq:defpathfibration}
\Omega M \xrightarrow{i} PM \xrightarrow{q} M
\end{equation}
where the first map is the inclusion map discussed above, and $q(\gamma(t))= \gamma(0)$.

We now introduce exact sequences that will be used below.
A fibration $U \to E \to M$ obeys the following long exact sequence of homotopy groups
\begin{multline}\label{eq:homles}
\cdots \longrightarrow \pi_n(U) \longrightarrow \pi_n(E) \longrightarrow \pi_n(M)\longrightarrow \pi_{n-1}(U) \longrightarrow  \\
\cdots \longrightarrow \pi_1(M) \longrightarrow \pi_0(U) \longrightarrow \pi_0(E) \longrightarrow \pi_0(M)	
\end{multline}
Note that this sequence also holds for the fibration $\Omega M \to PM \to M$.

Let $A,B \subseteq M$ be subspaces so that $\operatorname{int}A \cup \operatorname{int} B =M$, where $\operatorname{int} A$ denotes the interior of $A$. The {\em Mayer-Vietoris} sequence is the long exact sequence in homology given by
\begin{multline}\label{eq:mv}
\cdots\to  H_{i+1}(M) \to H_i(A \cap B) \to H_i(A) \oplus H_i(B) \to H_i(M) \to \cdots\\  \to H_0(A) \oplus H_0(B) \to H_0(M) \to 0. 	
\end{multline}
This sequence will prove useful when dealing with $1$-point stabilization.

When dealing with trivial bundles, the K\"unneth formula, which describes the homology group of a product of spaces, will be useful. For a principal ideal domain $G$, and CW complexes $M_1$ and $M_2$, it states that the following sequence is exact:
\begin{multline}\label{eq:kunneth}
0 \xrightarrow{\ell_1}\bigoplus_{i+j=k} H_i(M_1;G) \otimes H_j(M_2;G) \xrightarrow{\ell_2} H_k(M_1 \times M_2;G) \xrightarrow{\ell_3}\\ \bigoplus_{i+j=k-1}\operatorname{Tor}(H_i(M_1;G),H_j(M_2;G)) \xrightarrow{\ell_4} 0 	
\end{multline}

We conclude by stating the Whitehead theorem, which we will rely on several times. To do so, we first introduce the following definition: a map $f: A \to B$ between topological spaces is a {\em weak homotopy equivalence} if it induces an isomorphism on all homotopy groups; namely if $f_*:\pi_k(A) \to \pi_k(B)$ is an isomorphism for all $k \geq 0$. The theorem states
\begin{theorem}[Whitehead's Theorem]\label{th:whitehead}
	A weak homotopy equivalence between CW complexes is a homotopy equivalence.
\end{theorem}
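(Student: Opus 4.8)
The plan is to reduce the statement to the relative case and then apply the compression/extension properties of CW pairs against maps whose homotopy fibres are trivial. First I would replace the weak homotopy equivalence $f:A\to B$ by an inclusion, using the standard mapping-cylinder construction: let $M_f = (A\times[0,1])\sqcup B$ with $(a,1)$ identified with $f(a)$, so that $A$ includes as $A\times\{0\}$, the projection $M_f\to B$ is a deformation retraction (hence a homotopy equivalence), and $f$ factors as the inclusion $A\hookrightarrow M_f$ followed by this retraction. Since a map is a homotopy equivalence if and only if its composite with a homotopy equivalence is, it suffices to show that the inclusion $A\hookrightarrow M_f$ is a homotopy equivalence. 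Because $f$ induces isomorphisms on all $\pi_k$ and the retraction $M_f\to B$ does too, the inclusion $A\hookrightarrow M_f$ is itself a weak homotopy equivalence; moreover one can arrange $M_f$ to be a CW complex containing $A$ as a subcomplex. Thus the problem becomes: show that a CW pair $(M_f,A)$ whose inclusion is a weak homotopy equivalence admits a deformation retraction of $M_f$ onto $A$.

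The key homotopical input is the following compression lemma: if $(X,Y)$ is a CW pair and $(Z,W)$ is any pair with $\pi_k(Z,W)=0$ for all $k$ up to the dimension of the cells being attached, then every map of pairs $(X,Y)\to(Z,W)$ is homotopic rel $Y$ to a map landing in $W$. In our situation the relevant pair is $(M_f,A)$ itself, and the long exact sequence of the pair together with the fact that $A\hookrightarrow M_f$ induces isomorphisms on all homotopy groups forces $\pi_k(M_f,A)=0$ for all $k$. I would carry out the compression cell by cell: working up the skeletal filtration of $M_f$ relative to $A$, each cell of $M_f\setminus A$ is attached along a map of its boundary sphere into the previous stage, and the vanishing of the relative homotopy groups lets me push the characteristic map of the cell into $A$ up to homotopy rel boundary, extending the homotopy over the whole complex using the homotopy extension property of CW pairs.

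The steps in order are therefore: (1) factor $f$ through the mapping cylinder and reduce to the inclusion of a CW pair; (2) verify using the long exact homotopy sequence of the pair that $\pi_k(M_f,A)=0$ for all $k$; (3) prove the compression lemma, constructing the deformation rel $A$ skeleton by skeleton; and (4) apply the compression lemma to the identity map of $(M_f,A)$ to obtain a deformation retraction of $M_f$ onto $A$, which exhibits the inclusion as a homotopy equivalence and hence $f$ as one.

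The main obstacle is step (3), the inductive compression across skeleta. The delicate point is that one cannot simply deform each cell independently; the deformations must be compatible on overlaps and must be assembled into a single homotopy defined on all of $M_f$ simultaneously. This requires careful use of the homotopy extension property to extend a homotopy defined on the $(n-1)$-skeleton (together with $A$) over the $n$-cells, and an appeal to the vanishing relative homotopy groups precisely in the dimension matching each cell so that the attaching data can be nullhomotoped rel boundary. Managing this inductive bookkeeping — ensuring the homotopies glue and that the limiting homotopy is continuous when $M_f$ is infinite-dimensional (invoking that a map out of a CW complex is continuous iff its restriction to each skeleton is, and that homotopies can be telescoped over the skeletal filtration) — is where the real work of the proof lies.
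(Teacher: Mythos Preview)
The paper does not actually prove Whitehead's Theorem: it is stated in the background section as a standard result from algebraic topology, with the reader referred to textbooks such as Hatcher and May for a proof. Your proposal reproduces the classical argument (mapping cylinder to reduce to an inclusion of CW pairs, vanishing of relative homotopy groups from the long exact sequence, then the compression lemma applied skeleton by skeleton via the homotopy extension property), which is correct and is essentially the proof one finds in the cited references; there is nothing to compare against in the paper itself.
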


\section{Proof of the results}

The following Proposition states the relationship between global asymptotic stabilization and dynamic feedback control.
\begin{proposition}\label{prop:dfcandgas}
Consider the control system $\dot x = f(x,u)$ on $M$ with control  $u \in U$. If a dynamic feedback with state-space $p: E \to M$   globally (resp.~$1$-point almost globally) stabilizes  $x^* \in M$,   then there exists a deformation retraction of $E$ onto $p^{-1}(x^*)$ (resp.~a deformation retraction of $E-\{z_1\}$ onto $p^{-1}(x^*)$) for some point $z_1 \in E$. 
\end{proposition}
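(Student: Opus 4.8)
The plan is to extract a deformation retraction from the flow of the closed-loop vector field $f_{cl}$ on $E$. First I would invoke the hypothesis that the dynamic feedback globally asymptotically stabilizes $x^*$ in the weak sense, which—by the definition of weak stabilization—means that the fibre $U^* := p^{-1}(x^*)$ is a globally asymptotically stable invariant set of the flow $\Phi_t := e^{tf_{cl}}$ on $E$ (the strong case is the special case where the stable set is a single point $z^* \in U^*$, so it suffices to treat the weak case and the strong case follows a fortiori, or is handled identically). Asymptotic stability gives: (i) global attraction, $\Phi_t(z) \to U^*$ as $t \to \infty$ for every $z \in E$; and (ii) Lyapunov stability of $U^*$. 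The goal is to turn the (infinite-time) flow into a retraction $r:[0,1]\times E \to E$ with $r(0,\cdot)=\mathrm{id}$, $r(1,\cdot)$ mapping into $U^*$, and $r(t,\cdot)$ fixing $U^*$ pointwise.

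The standard device is a time reparametrization that compactifies $[0,\infty)$ to $[0,1)$ and then extends continuously at $t=1$. Concretely, I would set $r(t,z) := \Phi_{\sigma(t)}(z)$ for $t \in [0,1)$ with $\sigma(t) = t/(1-t)$, so $\sigma(0)=0$ and $\sigma(t)\to\infty$ as $t\to 1$; and define $r(1,z)$ to be the limit point. Two technical points must be addressed here. The first is that the naive limit $\lim_{t\to 1} \Phi_{\sigma(t)}(z)$ need not exist as a single point in $U^*$ (the omega-limit set could be larger, e.g. if trajectories spiral toward a closed orbit inside the fibre); since we only need a retraction onto $U^*$, not a homotopy to a constant, this is fine provided we can make a \emph{continuous} choice. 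A clean way around this: because $U^*$ is a globally asymptotically stable compact-or-closed invariant set, one can use a converse-Lyapunov-type argument to produce a smooth $V:E\to[0,\infty)$ with $V^{-1}(0)=U^*$ and $\dot V < 0$ off $U^*$; then along each trajectory the "time to reach the sublevel set $\{V \le \epsilon\}$" is continuous in the initial condition, which lets one reparametrize so that, say, $r(1,z)$ is the first hitting point of $U^*$ along the (reparametrized) trajectory—except trajectories reach $U^*$ only asymptotically, so instead one reparametrizes so that $V(r(t,z)) = (1-t)V(z)$, forcing $V(r(1,z))=0$, i.e. $r(1,z)\in U^*$. Continuity and smoothness of the flow plus $\dot V<0$ make $t\mapsto r(t,z)$ well-defined and jointly continuous, including at $t=1$ where one appeals to the global attraction (i) to see $r(t,z)\to U^*$, and to Lyapunov stability (ii) to get continuity in $z$ uniformly near $t=1$.

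The second point is the fixed-set condition: for $z \in U^*$ the fibre is invariant under $\Phi_t$, so $r(t,z)$ stays in $U^*$, but we need $r(t,z)=z$ exactly. Since $V\equiv 0$ on the invariant set $U^*$, the reparametrization $V(r(t,z))=(1-t)V(z)$ degenerates there, so I would simply \emph{define} $r(t,z)=z$ for $z\in U^*$ and all $t$; joint continuity at the "seam" $\partial U^*$-adjacent points again follows from Lyapunov stability of $U^*$ (points starting near $U^*$ stay near $U^*$, so $r$ does not jump). For the $1$-point almost global case, the hypothesis gives a $z_1 \in E$ such that $U^*$ is globally asymptotically stable for the flow restricted to $E-\{z_1\}$; one runs the identical construction on $E-\{z_1\}$ (which is still invariant, as the trajectory through $z_1$ either is the equilibrium-type orbit itself or we simply exclude it), obtaining the required deformation retraction of $E-\{z_1\}$ onto $p^{-1}(x^*)$.

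The main obstacle I anticipate is purely the continuity-at-$t=1$ argument, i.e. showing the reparametrized flow extends continuously over the "endpoint" slice and patches continuously with the identity on $U^*$. Global attraction alone is not enough—one genuinely needs Lyapunov stability of the target set to control the trajectories uniformly, and this is exactly the point where one should cite the analogous argument in~\cite{bhat2000topological} for the state-feedback case and note that it carries over verbatim once "point $x^*$" is replaced by "compact invariant fibre $U^*$." Everything else (invariance of fibres under the closed-loop flow because $p_*f_{cl}(z) = f(p(z),u)$ projects consistently, smoothness of $\Phi_t$, the monotone reparametrization) is routine.
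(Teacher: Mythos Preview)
Your proposal is correct and takes exactly the same approach as the paper, which simply says ``it suffices to time-rescale the flow $e^{tf(x,u)}:[0,\infty] \times E \to E$ to a flow $\varphi:[0,1] \times E \to E$ to obtain the contraction sought after'' and explicitly omits the details. You have in fact supplied considerably more than the paper does---correctly flagging that Lyapunov stability (not mere attraction) is what guarantees continuity of the extension at $t=1$, and that patching with the identity on $U^*$ is the delicate point in the weak case---so your write-up is, if anything, more complete than the original.
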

The proof of this proposition is immediate; it suffices to time-rescale the flow $e^{tf(x,u)}:[0,\infty] \times E \to E$ to a flow $\varphi:[0,1] \times E \to E$ to  obtain the contraction sought after. We omit the details.

Equipped this Proposition~\eqref{prop:dfcandgas}, we now restate our main Theorems in topological terms. The equivalent of Theorem~\ref{th:main1} is as follows:
\begin{theorem}\label{theorem:MaintheoremA}
Let $M, U$ be  smooth, finite dimensional manifolds. Let $p:E \to M$ be a fibre bundle with fibre $U$. Then $E$ deformation retracts onto a point or a fibre only if $M$ is contractible.
\end{theorem}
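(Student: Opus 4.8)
In both cases the plan is to prove the stronger statement that $M$ is \emph{weakly} contractible, $\pi_k(M)=0$ for all $k\ge 0$, and then to invoke Whitehead's theorem (Theorem~\ref{th:whitehead}): $M$ is a manifold, hence a CW complex, so a weakly contractible $M$ is contractible. The engine throughout is the long exact homotopy sequence~\eqref{eq:homles} of the fibration $U\to E\xrightarrow{p}M$.

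\emph{Retraction onto a fibre.} Suppose $E$ deformation retracts onto $F=p^{-1}(x^*)$. A fibre of a bundle is diffeomorphic to the model fibre $U$, so the hypothesis says exactly that the fibre inclusion $\iota\colon U\hookrightarrow E$ is a homotopy equivalence; thus $\iota_*\colon\pi_k(U)\to\pi_k(E)$ is an isomorphism for every $k$. In~\eqref{eq:homles}, surjectivity of $\iota_*$ forces $\pi_k(E)\to\pi_k(M)$ to be the zero map, so by exactness the connecting map $\partial\colon\pi_k(M)\to\pi_{k-1}(U)$ is injective; injectivity of $\iota_*$ forces $\operatorname{im}\partial$ to be trivial; hence $\pi_k(M)=0$ for all $k\ge 1$. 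At the tail of~\eqref{eq:homles} one observes separately that $\pi_0(U)\to\pi_0(E)$ is a bijection (again because $\iota$ is a homotopy equivalence) and that $p$ is surjective (a bundle projection), which yields $\pi_1(M)=\ast$ and $\pi_0(M)=\ast$. So $M$ is weakly contractible, hence contractible.

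\emph{Retraction onto a point.} Suppose $E$ deformation retracts onto a point $z^*$, so $E$ is contractible; set $x^*=p(z^*)$. Here I would compare $p$ with the path fibration~\eqref{eq:defpathfibration}: the contraction of $E$ assigns to $e\in E$ a path in $E$ from $e$ to $z^*$, which pushed forward by $p$ and reparametrised to end at $x^*$ gives a point of $PM$; this defines $\Phi\colon E\to PM$ with $q\circ\Phi=p$, carrying the fibre $p^{-1}(x^*)\cong U$ into $\Omega M$. Thus $\Phi$ is a morphism from $U\to E\to M$ to the path fibration $\Omega M\to PM\to M$ covering $\mathrm{id}_M$. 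Comparing the two long exact homotopy sequences via the Five-lemma — the total spaces $E,PM$ are weakly contractible and the maps on $\pi_*(M)$ are identities — shows $\Phi$ induces isomorphisms $\pi_k(U)\cong\pi_k(\Omega M)$ for all $k$ (with the usual care in degrees $0,1$, which is where one must use that $U$ is connected: the covering $\Z\to\R\to S^1$ has $E$ contractible but $M=S^1$ not, so this hypothesis cannot be omitted). Since $U$ is a manifold and $\Omega M$ has CW homotopy type, Whitehead's theorem upgrades this to a homotopy equivalence $U\simeq\Omega M$.

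\emph{The obstacle.} It remains to pass from $U\simeq\Omega M$ to ``$M$ contractible'', and this is the step I expect to be the real difficulty, because it cannot be carried out by homotopy bookkeeping alone — e.g.\ $\Omega\mathbb{C}P^\infty\simeq S^1$ has bounded homology although $\mathbb{C}P^\infty$ is not contractible — and it genuinely uses that $M$ (equivalently $U$) is \emph{finite-dimensional}. Since $U$ is a manifold, $H_k(U;\Z)=0$ for $k>\dim U$, so $H_k(\Omega M;\Z)=0$ for all large $k$. Now $M$ is simply connected ($\pi_1(M)\cong\pi_0(U)=\ast$); were it not contractible, the Hurewicz theorem (Theorem~\ref{th:hurewicz}) would make it $(d-1)$-connected with $H_d(M)\ne 0$ for some $d\ge 2$, and a spectral-sequence analysis of the path fibration~\eqref{eq:defpathfibration} — transgression of the bottom class of $H_*(\Omega M)$ in degree $d-1$, the Pontryagin product on $H_*(\Omega M)$, and finite-dimensionality of $M$ — would force $H_*(\Omega M;\Z)$ to be nonzero in arbitrarily high degrees, contradicting the bound above. (An alternative, heavier, route to this last implication is the ellipticity/hyperbolicity dichotomy of rational homotopy theory, which yields the same conclusion.)
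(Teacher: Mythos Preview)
Your proposal is correct and follows essentially the same line as the paper. In particular, the fibre case is identical, and for the point case you build the same comparison map to the path fibration, apply the five lemma to get $U\simeq\Omega M$ (the paper's Lemma~\ref{lem:homotopyloopfiber}), and then argue that a finite-dimensional $U$ forces $H_*(\Omega M)$ to be bounded, contradicting non-contractibility of $M$. The only difference is packaging at the last step: where you sketch the Serre spectral sequence argument (transgression plus Pontryagin product), the paper simply quotes Serre's Proposition~11 from~\cite{serre1951homologie} and then handles the residual case $H_n(M;F)=0$ for all $n\ge 2$ and all fields $F$ via the universal coefficient theorem and iterated Hurewicz --- a case distinction your Hurewicz-first organization avoids.
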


We now restate Theorem~\ref{th:main2} in topological terms.
\begin{theorem}\label{theorem:MaintheoremB}
Let $M, U$ be  smooth, finite dimensional manifolds with $M$ closed. Let $p:E \to M$ be a fibre bundle with fibre $U$. Then, there exists a $z_1 \in E$ such that $E^1:=E-\{z_1\}$ deformation retracts onto a point $z^*\in E$ or a fibre $p^{-1}(x^*)$, for some $z^*, x^*$, only if $p:E \to M$ is a nontrivial bundle. 
\end{theorem}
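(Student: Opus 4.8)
The plan is to argue by contraposition: assume $p: E \to M$ is the trivial bundle $E = M \times U$, and show that for any point $z_1 \in E$, the space $E^1 := E - \{z_1\}$ cannot deformation retract onto a point or onto a fibre $p^{-1}(x^*) \cong U$. The key observation is that a deformation retraction is in particular a homotopy equivalence, so it suffices to show $E^1$ is not homotopy equivalent to $U$ (the point case being $U$ itself trivial, which is subsumed). Since $M$ is a closed manifold, by Lemma~\ref{lemma:compactandcontractible} it is not contractible, and this is the leverage we will exploit against the removal of a single point.

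First I would compute the homology of $E^1 = (M \times U) - \{z_1\}$ using the Mayer--Vietoris sequence~\eqref{eq:mv}. Write $z_1 = (x_1, u_1)$, take $A$ to be a small ball neighborhood of $z_1$ in $E$ minus $z_1$ (homotopy equivalent to a sphere $S^{d-1}$, where $d = \dim M + \dim U$), and take $B = E^1$ restricted away from $z_1$, so $A \cup B = E^1$ and $A \cap B \simeq S^{d-1}$; alternatively, reconstruct $E = M \times U$ from $E^1$ and a ball around $z_1$. The cleaner route: apply Mayer--Vietoris to $E = A' \cup B'$ with $A'$ a ball around $z_1$ and $B' = E^1$, so $A' \cap B' \simeq S^{d-1}$, $A'$ contractible, $H_*(E) = H_*(M \times U)$ known from the Künneth formula~\eqref{eq:kunneth}. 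This yields, for $i \neq 0, d-1, d$, an isomorphism $H_i(E^1) \cong H_i(M\times U)$, and near the top degrees a short exact-sequence computation relating $H_i(E^1)$ to $H_i(M \times U)$ and $H_{i}(S^{d-1})$. The point is that removing a single point changes homology only in a controlled, low-codimension way, while the Künneth formula forces $H_*(M \times U)$ to carry the nontrivial homology of $M$ in degrees that are, in general, far from $d-1$ and $d$.

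Next I would derive the contradiction. If $E^1 \simeq U$, then $H_i(E^1) \cong H_i(U)$ for all $i$. Combining with the Mayer--Vietoris computation, we get $H_i(M \times U) \cong H_i(U)$ for all $i$ outside the narrow band of degrees affected by removing a point. By the Künneth formula, $H_i(M\times U) = \bigoplus_{j+k=i} H_j(M) \otimes H_k(U) \oplus (\text{Tor terms})$, so this would force $\bigoplus_{j \geq 1, j+k = i} H_j(M)\otimes H_k(U) = 0$ (plus vanishing Tor) for all such $i$, which, since $H_0(M) = \Z$, is only possible if $\widetilde H_j(M) = 0$ for all $j$ in a suitable range. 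To promote "reduced homology vanishes" to "contractible" I would use the closed-manifold hypothesis together with Lemma~\ref{lem:onhn}: a closed connected manifold has $H_n(M;\Z)$ either $\Z$ or $0$, and more importantly a closed manifold with the integral homology of a point, combined with a $\pi_1$ argument via Hurewicz (Theorem~\ref{th:hurewicz}) and Whitehead (Theorem~\ref{th:whitehead}), would have to be contractible --- contradicting Lemma~\ref{lemma:compactandcontractible}. The subtlety is that the band of "bad" degrees $\{d-1, d\}$ might in principle hide the only nonzero homology of $M$; this is where I expect the main obstacle.

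The hard part will be handling the case where $M$'s nontrivial homology (or homotopy) is concentrated exactly in the degrees perturbed by the point removal, i.e., ensuring the argument is not defeated by a coincidence of dimensions. I would address this by working with the fibre $U$ of arbitrary dimension and noting that if $\dim U \geq 1$ then $M \times U$ has dimension strictly greater than $\dim M$, so the top homology of $M$ (nonzero in degree $\dim M$ by Lemma~\ref{lem:onhn} when $M$ is orientable, or a $\Z/2$-coefficient version otherwise) sits strictly below the top degree $d$ of $E$, hence the Künneth contribution $H_{\dim M}(M) \otimes H_0(U)$ survives the Mayer--Vietoris comparison and obstructs $E^1 \simeq U$; and if $\dim U = 0$ the bundle is a covering space and the claim is handled separately and more directly. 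A cleaner uniform alternative, which I would pursue in parallel, is homotopy-theoretic: a deformation retraction of $E^1 = (M \times U)-\{z_1\}$ onto a fibre would make the inclusion of the fibre a homotopy equivalence, hence $\pi_1(E^1) \cong \pi_1(U)$; but $\pi_1\big((M\times U) - \{z_1\}\big) \cong \pi_1(M \times U) \cong \pi_1(M)\times\pi_1(U)$ as long as $d \geq 3$ (removing a codimension-$\geq 3$ point does not change $\pi_1$), forcing $\pi_1(M) = 1$, and similarly higher homotopy groups up to degree $d-2$ are unchanged, forcing $\pi_i(M) = \pi_i(U) - \pi_i(U) = 0$ in a range; then Hurewicz and Whitehead upgrade this to contractibility of $M$, again contradicting Lemma~\ref{lemma:compactandcontractible}. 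I would present whichever of these two routes (homology via Mayer--Vietoris, or homotopy via the long exact sequence and codimension estimates) gives the cleanest closing argument, keeping the dimension bookkeeping explicit so the low-dimensional exceptional cases are visibly covered.
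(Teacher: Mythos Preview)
Your overall architecture is exactly the paper's: assume $E=M\times U$, use the long exact sequence of the pair $(E,E^1)$ (or Mayer--Vietoris with a ball around $z_1$) to get $H_i(E^1)\cong H_i(E)$ for $1\le i\le d-2$ with $d=\dim E$, and then confront this with the K\"unneth contribution $H_n(M)\otimes H_0(U)$ (or $H_{n-1}(M)\otimes H_0(U)$ in the non-orientable case) to contradict $H_i(E^1)\cong H_i(U)$. This is precisely Lemma~\ref{lem:eqhomominus} combined with Lemma~\ref{lem:hnuorient}.

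The gap is in your treatment of the ``bad band''. You write that when $\dim U\ge 1$ the degree $n=\dim M$ ``sits strictly below the top degree $d$ of $E$, hence the K\"unneth contribution $H_{\dim M}(M)\otimes H_0(U)$ survives the Mayer--Vietoris comparison''. That is not quite right: the isomorphism $H_i(E^1)\cong H_i(E)$ holds only for $i\le d-2$, not for $i=d-1$. When $\dim U=1$ we have $n=d-1$, so the crucial degree $n$ is \emph{inside} the band where removing a point can alter homology, and your survival claim fails. The same issue bites your homotopy route: the codimension argument gives $\pi_i(E^1)\cong\pi_i(E)$ only for $i\le d-2$, so with $\dim U=1$ you recover $\pi_i(M)=0$ only for $i\le n-1$, leaving $M$ a possible homology sphere rather than contractible.

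The paper closes this gap by an explicit case split on the fibre when $\dim U=1$. For $U=\R$ one has $E\simeq M$, so $H_d(E)=0$, and the Mayer--Vietoris sequence in degree $d-1$ directly injects $H_{d-1}(S^{d-1})=\Z$ into $H_{d-1}(E^1)$, obstructing contractibility. For $U=S^1$, the strong case is dispatched by K\"unneth in degree~$1$, while the weak case (retraction onto a fibre $S^1$) is genuinely the hardest step of the whole proof: one first shows $M$ must be an integral homology sphere (Lemma~\ref{lem:Mishomosphere}), and then runs Mayer--Vietoris in degree $n$ against K\"unneth to force $H_n(E)$ to be simultaneously $0$ and $\Z$. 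Your proposal anticipates that something special happens here but does not supply this argument; once you add the $\dim U=1$ case analysis, your proof and the paper's coincide.
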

We recall that $M,U$ are assumed to be CW complexes. 
\subsection{Proof of Theorem~\ref{theorem:MaintheoremA}}

We will treat the case of a deformation retraction onto a fibre and onto a point separately, as they require  different approaches. We start with deformation onto a fibre.

\mab{Obstruction to weak global stabilization. } 

We show here that if $p:E \to M$ deformation retracts onto one of its fibres, then $M$ is contractible. 
Let $x^* \in M$ and $U^* = p^{-1}(x^*)$ be a fibre onto which $E$ deformation retracts. Then, the inclusion map $i: U^* \to E$ is a homotopy equivalence.

Now, recall the long exact sequence of homotopy groups of a fibration
$$ \cdots \xrightarrow{} \pi_k(U) \xrightarrow{i_*}  \pi_k(E) \xrightarrow{p_*} \pi_k(M) \xrightarrow{\delta_*} \pi_{k-1}(U) \longrightarrow \cdots
$$

Since $i$ is a homotopy equivalence, $i_*$ is an isomorphism; thus, by exactness of the sequence, $\ker p_* = \im i_* = \pi_k(E)$.
Relying on exactness at $\pi_k(M)$, we get that $\ker \delta_*=\im p_*$; but  $\im p_*$ is $\{0\}$ because its kernel equals its domain by the above, hence $\delta_*$ is injective. It then follows that
$$\im \delta_* \simeq \pi_k(M).$$
Appealing to the exactness of the  sequence one more time, we have that $\im \delta_* = \ker i_*$. However, since $i_*$ is an isomorphism, its kernel is  trivial. We conclude
\begin{equation}\label{eq:pinzero}
\pi_k(M) = 0 \mbox{ for all } k \geq 1.	
\end{equation}
Since $M$ is connected, $\pi_0(x^*)=\pi_0(M)$.
Now consider the inclusion map
$$ j: \{x^*\} \to M.$$
 Owing to~\eqref{eq:pinzero}, we have  that $j_*: \pi_k(x^*) \to \pi_k (M)$ is an isomorphism for all $k \geq 0$.  Hence, $j$ is a weak homotopy equivalence.
From Whitehead's Theorem (Theorem~\ref{th:whitehead}), we conclude that $j$ is in fact a homotopy equivalence and thus $M$ is homotopic to a point. This proves the first statement. 

\mab{Obstruction to strong global stabilization.}
We now show that if the total space of the bundle $p:E \to M$ is contractible, then $M$ is contractible. The proof relies on a couple of Lemmas.

\begin{lemma}\label{lem:simplyconnected}
Let $p:E \to M$ be a fibre bundle with contractible total space. Then $M$ is simply-connected.
\end{lemma}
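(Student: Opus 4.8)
The plan is to read the conclusion of Lemma~\ref{lem:simplyconnected} straight off the long exact homotopy sequence~\eqref{eq:homles} of the bundle, using nothing about $E$ beyond contractibility. First observe that a fibre bundle projection is surjective (by local triviality, $U\neq\emptyset$), so $M=p(E)$ is the continuous image of the connected space $E$ and is therefore connected; fix a basepoint $x^*\in M$ and identify $p^{-1}(x^*)$ with $U$ via a trivialization. Since fibre bundles are Serre fibrations, \eqref{eq:homles} applies.

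Now consider the tail
\[
\pi_1(U)\longrightarrow \pi_1(E)\xrightarrow{\,p_*\,}\pi_1(M)\xrightarrow{\,\delta_*\,}\pi_0(U)\longrightarrow \pi_0(E),
\]
which is exact, with the last two terms exact as pointed sets. Contractibility of $E$ gives $\pi_1(E)=0$ and $\pi_0(E)=\ast$. Exactness at $\pi_1(M)$ then yields $\ker\delta_*=\im p_*=0$, i.e.\ $\delta_*$ carries only the identity of $\pi_1(M)$ to the basepoint component of $U$. Next use that $U$ is a connected manifold: then $\pi_0(U)=\ast$ is a single point, so $\delta_*$ is the constant map, and the previous line forces $\pi_1(M)=0$. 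Combined with connectedness of $M$, this is precisely simply-connectedness.

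The argument is purely formal, so there is no substantive obstacle; the two points that deserve a moment's care are (i) that the relevant tail of~\eqref{eq:homles} is a sequence of pointed sets rather than groups, so ``$\delta_*$ injective'' must be read as ``$\delta_*^{-1}(\ast)$ is trivial'', which is exactly what exactness at $\pi_1(M)$ supplies; and (ii) that the hypothesis actually doing the work beyond contractibility of $E$ is connectedness of the fibre $U$ — it is what makes $\pi_0(U)$ trivial. This hypothesis is essential and not merely convenient: without it the statement fails, as the covering $\R\to S^1$ with discrete fibre $\Z$ shows ($E=\R$ is contractible while $M=S^1$ is not simply-connected). Since $U$ is connected in the present setting, the proof closes.
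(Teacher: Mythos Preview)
Your argument is correct and is essentially the paper's own proof: both read $\pi_1(M)=0$ directly off the tail of the long exact sequence~\eqref{eq:homles}, using $\pi_1(E)=0$ from contractibility and $\pi_0(U)=\ast$ from connectedness of the fibre. Your additional care with the pointed-set nature of the $\pi_0$ terms and the illustrative counterexample $\R\to S^1$ are welcome clarifications but do not change the route.
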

\begin{proof}
	From the long exact sequence~\eqref{eq:homles}, we have that
	$$ \pi_1(E) \xrightarrow{\ell_1} \pi_1(M) \xrightarrow{\ell_2} \pi_0(U) \xrightarrow{\ell_3} \pi_0(E)$$
	is an exact sequence. Because $E$ and $U$ are connected, $\pi_0(E)$ and $\pi_0(U)$ are singletons and thus $\ell_3$ is necessarily bijective and $\ell_2$ maps onto the unique element of $\pi_0(U)$.  Because $E$ is contractible, $\pi_1(E)=0$. and thus $\im \ell_1 =0$. By exactness, we conclude that  $\ell_2$ has a trivial kernel. Hence, we have that $\pi_1(M)$ contains only one element, and thus $\pi_1(M)=0$, which proves the result. 
\end{proof}
The next lemma relates the loop space of $M$ to the fibre $U$: 
\begin{lemma}\label{lem:homotopyloopfiber}
Let $p:E \to M$ be a fibre bundle with contractible total space. Then $\Omega M$ is homotopy equivalent to $U$.
\end{lemma}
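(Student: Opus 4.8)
The plan is to compare the given fibration $U \to E \to M$ with the path--space fibration $\Omega M \xrightarrow{i} PM \xrightarrow{q} M$ of~\eqref{eq:defpathfibration} by constructing an explicit map of fibrations covering $\mathrm{id}_M$, and then to feed the resulting comparison of long exact sequences into Whitehead's theorem. Fix a basepoint $e_0 \in E$ with $p(e_0) = x^*$, and identify $U$ with the fibre $U^* := p^{-1}(x^*)$. Since $E$ is contractible, choose a homotopy $h : E \times [0,1] \to E$ with $h(\cdot,0) = \mathrm{id}_E$ and $h(\cdot,1) \equiv e_0$, and set
$$ \Phi : E \to PM, \qquad \Phi(e) = \bigl(t \mapsto p(h(e,t))\bigr). $$
First I would verify that $\Phi$ is well defined and continuous: the path $t \mapsto p(h(e,t))$ runs from $p(e)$ to $p(e_0) = x^*$, hence lies in $PM$, and continuity for the compact--open topology follows from the exponential law applied to the continuous map $(e,t) \mapsto p(h(e,t))$ on $E \times [0,1]$. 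Moreover $q \circ \Phi = p$, since $q$ evaluates a path at $0$, so $\Phi$ is a map of fibrations over $M$. Restricting $\Phi$ over $x^*$ gives $\phi := \Phi|_{U^*} : U^* \to \Omega M$, $\phi(u) = \bigl(t \mapsto p(h(u,t))\bigr)$, which indeed lands in $\Omega M$ because both endpoints of $h(u,\cdot)$ lie over $x^*$.

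Next I would exploit naturality of the long exact sequence~\eqref{eq:homles} with respect to $\Phi$. Both $E$ and $PM$ are contractible, so $\pi_k(E) = \pi_k(PM) = 0$ for all $k$; consequently in each sequence the connecting maps $\delta^E_* : \pi_k(M) \to \pi_{k-1}(U^*)$ and $\delta^{PM}_* : \pi_k(M) \to \pi_{k-1}(\Omega M)$ are isomorphisms. Because $\Phi$ covers $\mathrm{id}_M$, the naturality square relating $\delta^E_*$, $\delta^{PM}_*$ and $\phi_*$ commutes, so $\phi_* = \delta^{PM}_* \circ (\delta^E_*)^{-1}$ is an isomorphism on $\pi_{k-1}$ for every $k \geq 1$ (this is exactly the conclusion of the five--lemma applied to the evident diagram, or simply follows because the $E$-- and $PM$--terms vanish). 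On $\pi_0$ both $U^*$ and $\Omega M$ are connected ($\pi_0(\Omega M) = \pi_1(M) = 0$ by Lemma~\ref{lem:simplyconnected}, and $U$ is connected), so $\phi$ is a weak homotopy equivalence.

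Finally, $U^*$ is a smooth manifold and hence a CW complex, while $\Omega M$ has the homotopy type of a CW complex; so Whitehead's theorem (Theorem~\ref{th:whitehead}) promotes the weak equivalence $\phi$ to a genuine homotopy equivalence, giving $U \cong U^* \simeq \Omega M$. The step I expect to demand the most care is confirming that $\Phi$ is an honest continuous map of fibrations in the compact--open topology (rather than merely a fibration map up to homotopy), together with the bookkeeping point that $\Omega M$ must be recognized as having the homotopy type of a CW complex before Whitehead's theorem applies.
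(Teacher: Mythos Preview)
Your argument is correct and follows essentially the same route as the paper: build a map $\Phi:E\to PM$ from a contraction of $E$, restrict over the basepoint to obtain $\phi:U\to\Omega M$, compare the long exact sequences of the two fibrations via the five-lemma (using $\pi_k(E)=\pi_k(PM)=0$), and then upgrade the resulting weak equivalence via Whitehead, after noting (as the paper does, citing Milnor) that $\Omega M$ has the homotopy type of a CW complex. Your version is in fact slightly more meticulous than the paper's, explicitly addressing continuity in the compact--open topology and the $\pi_0$ case via Lemma~\ref{lem:simplyconnected}.
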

This proof follows a standard construction; we include it for completeness.
\begin{proof}
 Fix  $z^* \in p^{-1}(x^*)$ and consider the pointed space $(M,x^*)$. Since $E$ is contractible, there exists a deformation retraction $r:[0,1] \times E \to E$ onto $z^*$. Then, $r_z(t):t \mapsto r(t,z)$ is the path traced by $z$ as the deformation retraction is applied. It is path in $E$ but after applying $p$, it yields a path in $M$ which ends at $x^*$ by construction. Hence, we have the map $m_1:E \to PM: z \mapsto p(r_z(t))$. In turn, when restricted to $U=p^{-1}(x^*)$, $m_1$ induces a map $m_2: U \to \Omega B$.  These maps yield the following commutative diagram.
$$
\begin{tikzcd}
 U \arrow{r}{i_1}\arrow[swap]{dd}{m_2} & E\arrow{dr}{p}\arrow[swap]{dd}{m_1} & \\
 & & M\\
 \Omega M\arrow{r}{i_1} & PM\arrow[swap]{ur}{q}  &
\end{tikzcd}
$$
where $i_1$ and $i_2$ are inclusion maps, and $q$ is the evaluation map at $t=0$, defined in~\eqref{eq:defpathfibration}. Now taking the long exact sequences in homotopy for the fibration $U \to E \to M$ and $\Omega M \to PM \to M$, we get a commutative diagram

$$
\begin{tikzcd}
\pi_{k+1}(E)\arrow[swap]{d}{m_{1*}}\arrow{r}{p_*} &  \pi_{k+1}(M)\arrow[swap]{d}{id_{*}}\arrow{r}{} & \pi_k(U) \arrow{r}{i_*}\arrow[swap]{d}{m_{2*}} & \pi_k(E)\arrow[swap]{d}{m_{1*}}\arrow{r}{p_*} &  \pi_k(M)\arrow[swap]{d}{id_{*}}\\
 \pi_{k+1}(PM) \arrow{r}{q_*} & \pi_{k+1}(M) \arrow{r}{} & \pi_k(\Omega M)\arrow{r}{i_{1*}} & \pi_k(PM) \arrow{r}{q_*} & \pi_k(M) 
\end{tikzcd}
$$
Since $PM$ and $E$ are contractible, $m_{1*}$ is an isomorphism; so is $id_*$. From the five lemma, we conclude that $m_{2*}$ is an isomorphism and hence $m_2:U \to \Omega M$ is a weak homotopy equivalence.

Since $M$ is a CW-complex, it is known~\cite{milnor1959spaces} that $\Omega M$ has the homotopy type of a CW complex as well. We thus conclude from Whitehead theorem that $U$ is homotopy equivalent to $\Omega M$. 	 
\end{proof}

We now proceed by contradiction and assume that such a dynamic feedback exists, and denote its state-space by the fibre bundle $p:E \to M$. As above,  it would induce a deformation retraction of the total space $E$ onto a point $z^* \in p^{-1}(x^*)$. From Lemma~\ref{lem:homotopyloopfiber}, this implies that the control space $U$ is homotopy equivalent to the loop space $\Omega M$ of $(M,x^*)$. But by assumption, the control space $U$ is finite-dimensional, say of dimension $r$, and thus its homology groups eventually vanish, i.e,  $H_{r+i}(U;F)=0$ for all $i \geq 1$.  Since $U$ is homotopy equivalent to $\Omega M$, it implies that 
\begin{equation}\label{eq:homofiberloop} H_i(\Omega M;F) \simeq H_i(U;F) = 0 \mbox{ for all }i \geq r+1.
 	 \end{equation}

We now show that~\eqref{eq:homofiberloop} can hold {\em only if} $M$ is contractible. To this end, we rely on the following result of Serre~\cite[Proposition 11]{serre1951homologie}:

\begin{proposition}[Serre] Let $F$ be a field and  $M$ be simply connected. If for some $n \geq 2$, $H_n(M;F) \neq 0$ and $H_i(M;F) = 0$ for all $i > n$, then for all integers $k \geq 0$, there exists $0 < j <n$ so that $H_{k+j}(\Omega M;F) \neq 0$. 
\end{proposition}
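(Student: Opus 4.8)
The plan is to run the homology Serre spectral sequence of the path--loop fibration $\Omega M \xrightarrow{i} PM \xrightarrow{q} M$ from~\eqref{eq:defpathfibration} and to extract a contradiction from the contractibility of $PM$. Because $M$ is simply connected the local coefficient system is trivial, and because $F$ is a field universal coefficients introduce no $\operatorname{Tor}$ term, so the second page is $E^2_{p,q} = H_p(M;F)\otimes_F H_q(\Omega M;F)$, and it converges to $H_\ast(PM;F)$, which is $F$ in total degree $0$ and vanishes in all positive total degrees. I would also record at the outset that $\Omega M$ is path-connected, since $\pi_0(\Omega M)=\pi_1(M)=0$, so $H_0(\Omega M;F)=F$.

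Next I would argue by contradiction. Suppose the conclusion fails, and let $k\ge 0$ be the smallest integer for which $H_q(\Omega M;F)=0$ for every $q$ with $k<q<k+n$. The first step is to observe that necessarily $H_k(\Omega M;F)\neq 0$: if $k=0$ this is the connectedness remark above; if $k\ge 1$, then minimality of $k$ means $k-1$ does not have the vanishing property, so $H_q(\Omega M;F)\neq 0$ for some $q\in\{k,k+1,\dots,k+n-2\}$, and since $H_{k+1},\dots,H_{k+n-1}$ all vanish by the standing assumption, the only possibility is $q=k$.

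The core step is to show that the entry $E^2_{n,k}=H_n(M;F)\otimes_F H_k(\Omega M;F)$ -- which is nonzero, being a tensor product over $F$ of two nonzero $F$-vector spaces -- is a permanent cycle surviving to $E^\infty_{n,k}$, contradicting the vanishing of $H_{n+k}(PM;F)$ (note $n+k\ge 2>0$). I would check this by killing all differentials touching bidegree $(n,k)$ on every page $r\ge 2$. The outgoing differential $d_r\colon E^r_{n,k}\to E^r_{n-r,\,k+r-1}$ has zero target: for $2\le r\le n$ the second index $k+r-1$ lies in $\{k+1,\dots,k+n-1\}$, where $H_\ast(\Omega M;F)$ vanishes by assumption, and for $r>n$ the first index $n-r$ is negative. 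The incoming differential $d_r\colon E^r_{n+r,\,k-r+1}\to E^r_{n,k}$ has zero source, because its first index $n+r$ exceeds $n$ and $H_i(M;F)=0$ for $i>n$ by hypothesis. Hence $E^\infty_{n,k}=E^2_{n,k}\neq 0$, the desired contradiction, and the proposition follows.

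I expect the main obstacle to be purely organizational: one must track precisely which bidegrees the in- and out-differentials hit on each page and verify that they always land in a region forced to vanish -- either by the truncation hypothesis $H_i(M;F)=0$ for $i>n$, by the assumed gap $H_{k+1}(\Omega M;F)=\dots=H_{k+n-1}(\Omega M;F)=0$, or by first-quadrant positivity. There is no deeper difficulty; this is essentially Serre's original application of his spectral sequence, and the only genuine inputs beyond bookkeeping are that $PM$ is contractible (established in the excerpt) and that a tensor product of nonzero vector spaces over a field is nonzero.
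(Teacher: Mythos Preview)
Your argument is correct and is essentially Serre's original proof via the homology spectral sequence of the path--loop fibration. Note, however, that the paper does not supply its own proof of this proposition: it is quoted as \cite[Proposition 11]{serre1951homologie} and used as a black box in the proof of Theorem~\ref{theorem:MaintheoremA}. So there is nothing to compare against beyond observing that you have reproduced Serre's spectral-sequence argument accurately, including the minimality trick to ensure $H_k(\Omega M;F)\neq 0$ and the bookkeeping that kills all differentials into and out of $E^r_{n,k}$.
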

We now have two cases:
\begin{description}
	\item[\it Case 1: $H_n(M;F) \neq 0$ for some $n \geq 2$ and field $F$:] 

In this case, the homology of $\Omega M$ does not vanish, which contradicts~\eqref{eq:homofiberloop} and concludes the proof.

\item[\it Case 2: $H_n(M;F) = 0$ for all $n \geq 2$ and all fields $F$:] From Lemma~\ref{lem:simplyconnected}, we have
\begin{equation}\label{eq:h1p10} H_1(M;\Z)=\pi_1(M)=0.
\end{equation}

Now, it follows from the Universal Coefficients Theorem that if for all $n \geq 1$, it holds that $H_n(M;F)=0$ for all fields $F$  then in fact \begin{equation}\label{eq:hn0}
H_n(M,\Z)=0.
 \end{equation}
See, for example,~\cite[Corollary 3A.7]{hatcher2005algebraic}. 

From Hurewicz Theorem~\ref{th:hurewicz} and~\eqref{eq:h1p10}, we have that  $h_*:\pi_2(M) \to H_2(M;\Z)$ is an isomorphism, whence using~\eqref{eq:hn0}, $\pi_2(M)=0$. Since $\pi_2(M)=0$, using the same arguments, we conclude that $h_*:\pi_3(M) \to H_3(M;\Z)=0$ is an isomorphism. Iterating in this fashion,  we obtain that $\pi_k(M)=0$ for all $k \geq 1$. 

Hence, as above, the inclusion map $j:\{x^*\} \to M$ is thus a weak homotopy equivalence and, by Whitehead Theorem, we conclude that $M$ is contractible.
\end{description}

\subsection{Proof of Theorem~\ref{theorem:MaintheoremB}}

The proof relies on two lemmas which we prove here. To illustrate the first one, assume that the homology groups of $M$ and $U$ are all freely generated. In this case, the K\"unneth formula for $E= M \times U$ reduces to
\begin{equation}\label{eq:kunlm1}
H_n(E)=\bigoplus_{i+j=n} H_i(M) \otimes H_j(U)
\end{equation}
Because $U$ is  connected, $H_0(U) = \Z.$ Since  $\dim M = n$ and $M$ is closed, we have $H_n(M) = \Z$ since we assumed the homology groups were freely generated. Thus 
\begin{equation}\label{eq:lemmpre1}
H_n(M) \otimes H_0(U) = \Z.
\end{equation}
 Furthermore, since $M$ is connected, $H_0(M;\Z)=\Z$ and we have 
\begin{equation}\label{eq:lemmpre2}
H_0(M) \otimes H_n(U)=\Z \otimes H_n(U) = H_n(U).
\end{equation}
 Putting these together,  we get from~\eqref{eq:kunlm1} that
\begin{equation}\label{eq:inlemma}
	H_n(E) = H_n(U) \oplus \Z \oplus  \bigoplus_{i+j=n,i\neq0,j\neq 0}H_i(M) \otimes H_j(U),
\end{equation}
showing that $H_n(E)$ is non-zero and not equal to $H_n(U)$. More generally, we have the following Lemma:

\begin{lemma}\label{lem:hnuorient}
Let $p: E \to M$ be a trivial bundle with fibre $U$ and $\dim M = n$. If $M$ is orientable, $H_{n}(E) \neq 0 $ and $H_{n}(E) \neq H_n(U) $; otherwise $H_{n-1}(E)\neq 0$ and $H_{n-1}(E)\neq H_{n-1}(U)$.
\end{lemma}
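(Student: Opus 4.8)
The plan is to reduce the general statement to the free case that was just computed as motivation, using the K\"unneth formula~\eqref{eq:kunneth} together with Lemma~\ref{lem:onhn} on top homology of closed manifolds. Since $E = M \times U$ is a trivial bundle, the K\"unneth sequence is available in every degree. The strategy is to isolate, inside $H_k(E)$ for the appropriate $k$ (namely $k=n$ when $M$ is orientable, $k=n-1$ when $M$ is not), a direct summand that is forced to be present by the top homology of $M$ and that is ``new'' relative to $H_k(U)$.

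First I would handle the orientable case. By Lemma~\ref{lem:onhn}, $H_n(M;\Z)=\Z$, and since $M$ is connected $H_0(M;\Z)=\Z$. In the K\"unneth sequence~\eqref{eq:kunneth} with $G=\Z$ and $k=n$, the Tor term involves $\operatorname{Tor}(H_i(M),H_j(U))$ with $i+j=n-1$, so $i\le n-1$; the left-hand term $\bigoplus_{i+j=n}H_i(M)\otimes H_j(U)$ contains the two summands $H_n(M)\otimes H_0(U)=\Z\otimes H_0(U)=H_0(U)=\Z$ (here $U$ connected gives $H_0(U)=\Z$) and $H_0(M)\otimes H_n(U)=H_n(U)$. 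The middle map $\ell_2$ is injective by exactness at the left, so $\bigoplus_{i+j=n}H_i(M)\otimes H_j(U)$ embeds as a subgroup of $H_n(E)$; in particular $H_n(E)$ contains a copy of $\Z\oplus H_n(U)\oplus\bigl(\bigoplus_{i+j=n,\,i,j\neq 0}H_i(M)\otimes H_j(U)\bigr)$, exactly as in~\eqref{eq:inlemma} but now only as a subgroup. This already gives $H_n(E)\neq 0$. For the inequality $H_n(E)\neq H_n(U)$ I would argue that the image of $\ell_2$ contains the $\Z$ summand $H_n(M)\otimes H_0(U)$, which intersects the copy of $H_n(U)$ (image of $H_0(M)\otimes H_n(U)$) trivially inside the direct sum; hence $H_n(E)$ strictly contains a copy of $H_n(U)$ together with an extra $\Z$, so it cannot be isomorphic to $H_n(U)$ unless one tracks ranks carefully --- the cleanest phrasing is that $\operatorname{rank}H_n(E)\ge \operatorname{rank}H_n(U)+1$, which rules out isomorphism.

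For the non-orientable case, Lemma~\ref{lem:onhn} gives $H_n(M;\Z)=0$, so the top nonvanishing homology must be sought in degree $n-1$; here one uses that for a closed connected non-orientable $n$-manifold $H_{n-1}(M;\Z)$ contains a $\Z/2$ summand (the torsion coming from Poincar\'e duality with $\Z/2$ coefficients, or equivalently from the fact that $H_{n-1}(M;\Z)\cong \Z^{b}\oplus\Z/2$ for some $b\ge 0$). Applying~\eqref{eq:kunneth} with $k=n-1$, the left-hand term contains $H_{n-1}(M)\otimes H_0(U)=H_{n-1}(M)\otimes\Z=H_{n-1}(M)$, which is nonzero (it has the $\Z/2$), and again $\ell_2$ is injective so $H_{n-1}(E)$ contains a copy of $H_{n-1}(M)\oplus H_{n-1}(U)$ coming from the summands with $j=0$ and $i=0$ respectively. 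Since $H_{n-1}(M)\neq 0$, this gives $H_{n-1}(E)\neq 0$ and $H_{n-1}(E)\supsetneq$ a copy of $H_{n-1}(U)$, hence $H_{n-1}(E)\neq H_{n-1}(U)$; in the worst case I would again phrase the inequality via a rank or torsion count (the extra $\Z/2$ makes $H_{n-1}(E)$ have strictly larger $2$-torsion than $H_{n-1}(U)$, or strictly larger rank if $b\ge 1$, either way precluding isomorphism).

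The main obstacle I anticipate is the inequality $H_k(E)\neq H_k(U)$ rather than mere nonvanishing: because K\"unneth only gives a (split, in fact, since we are over $\Z$ and the sequence splits) short exact sequence, I must make sure the ``extra'' summand produced by $H_{\mathrm{top}}(M)$ genuinely enlarges $H_k(E)$ beyond $H_k(U)$ and is not somehow absorbed. Using the splitting of the K\"unneth sequence over a PID this is clean: $H_k(E)\cong\bigl(\bigoplus_{i+j=k}H_i(M)\otimes H_j(U)\bigr)\oplus\bigl(\bigoplus_{i+j=k-1}\operatorname{Tor}(H_i(M),H_j(U))\bigr)$, and the first bracket visibly contains $H_k(U)$ (the $i=0$ term) as a \emph{proper} direct summand because the $i=n$ (resp.\ the $\Z/2$-bearing $i=n-1$) term is a nonzero complement. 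I would make the ``proper'' point precise by noting $H_{\mathrm{top}}(M)\otimes H_0(U)\neq 0$ and lies in a different index of the direct sum than $H_0(M)\otimes H_k(U)$, so the two are independent summands; invoking the splitting then finishes both the nonvanishing and the inequality simultaneously, and the non-orientable case is identical with $n$ replaced by $n-1$ and $\Z$ replaced by the $\Z/2$-torsion of $H_{n-1}(M;\Z)$.
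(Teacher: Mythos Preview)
Your proposal is correct and follows essentially the same approach as the paper: apply the K\"unneth sequence in degree $n$ (orientable case) or $n-1$ (non-orientable case), use injectivity of $\ell_2$ to embed $H_n(U)\oplus\Z\oplus\cdots$ (respectively $H_{n-1}(U)\oplus H_{n-1}(M)\oplus\cdots$, with $H_{n-1}(M)\neq 0$ for closed non-orientable $M$) into $H_k(E)$, and conclude. You are more explicit than the paper about why embedding a strictly larger group rules out $H_k(E)\cong H_k(U)$---you invoke the splitting of K\"unneth over $\Z$ and a rank/torsion count, whereas the paper leaves this step terse; this extra care is warranted since finite generation of $H_*(U)$ (from $U$ being a finite-dimensional manifold) is what actually makes the inequality go through.
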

\begin{proof}
We deal with both cases separately.
\xc{$M$ is orientable:} Since $M$ is closed, it holds that $H_n(M)=\Z$. The K\"unneth sequence says that 
\begin{equation}\label{eq:kunneth2}
0 \xrightarrow{\ell_1}\underbrace{\bigoplus_{i+j=n} H_i(M) \otimes H_j(U)}_{A} \xrightarrow{\ell_2} H_n(E) \xrightarrow{\ell_3}\\ B\xrightarrow{\ell_4} 0 	
\end{equation}
is exact, where $B$ is given in~\eqref{eq:kunneth}.  Now by exactness, $0= \im \ell_1 = \ker \ell_2$ and thus $\ell_2$ is injective. But from~\eqref{eq:lemmpre1} and~\eqref{eq:lemmpre2}, we know that $A=H_n(U) \oplus \Z \oplus \cdots$  thus $H_n(E) \neq 0$ and $H_n(E) \neq H_n(U)$.
\xc{$M$ is not orientable:} 
As above, we have $H_{n-1}(U) \otimes H_{0}(M) = H_{n-1}(U)$ and $H_0(U) \otimes H_{n-1}(M) = H_{n-1}(M)$. Now consider the K\"unneth sequence of degree  $n-1$, and label the first term $A$ and last term $B$ as in~\eqref{eq:kunneth2}. The term $A$ is in that case
$$
A = H_{n-1}(U) \oplus H_{n-1}(M) \oplus \cdots 
$$
If $M$ not orientable, since it is a connected closed manifold we necessarily have that $H_{n-1}(M) \neq 0$ (see, e.g., \cite[Corollary 3.26]{hatcher2005algebraic}). Using the fact that $\ell_2$ is injective, as was shown above, we conclude that $H_{n-1}(E) \neq H_{n-1}(U)$ and $H_{n-1}(E) \neq 0$.

\end{proof}

The next result shows that the homology of  $E^1$ is very similar to the one of $E$.

\begin{lemma}\label{lem:eqhomominus}
Let $p:E \to M$ be a fibre-bundle with $\dim E = m$. Then, for $1\leq k\leq m-2$, we have $H_k(E)=H_k(E^1).$
\end{lemma}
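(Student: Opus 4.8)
The plan is to use a Mayer–Vietoris argument, realizing $E$ as the union of a small ball around the removed point $z_1$ and its complement. Concretely, since $E$ is an $m$-dimensional manifold, choose a chart neighborhood $A$ of $z_1$ diffeomorphic to $\R^m$, so that $A$ is contractible and $A \setminus \{z_1\}$ deformation retracts onto a sphere $S^{m-1}$. Set $B = E \setminus \{z_1\} = E^1$. Then $\operatorname{int}A \cup \operatorname{int}B = E$ (the interiors still cover because $A$ is open and $B$ is open, and their union misses no point of $E$ — every point other than $z_1$ is in $B$, and $z_1 \in \operatorname{int}A$), and $A \cap B = A \setminus \{z_1\} \simeq S^{m-1}$. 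This is the setup that makes the Mayer–Vietoris sequence~\eqref{eq:mv} available.

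Next I would feed these spaces into~\eqref{eq:mv}. For a fixed degree $k$ with $1 \le k \le m-2$, the relevant portion of the sequence reads
$$
H_{k+1}(E) \to H_k(A\cap B) \to H_k(A) \oplus H_k(B) \to H_k(E) \to H_{k-1}(A \cap B).
$$
Since $A$ is contractible, $H_k(A)=0$ for $k \ge 1$ and $H_0(A)=\Z$. Since $A \cap B \simeq S^{m-1}$, its reduced homology is concentrated in degree $m-1$; in particular $H_k(A\cap B) = 0$ and $H_{k-1}(A\cap B)=0$ whenever $1 \le k \le m-2$ and also $k-1 \ne m-1$, i.e. $k \ne m$, which holds in our range. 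Thus for $1 \le k \le m-2$ the sequence gives $0 \to H_k(B) \to H_k(E) \to 0$, hence $H_k(E^1) = H_k(B) \cong H_k(E)$, as claimed. One has to be slightly careful at the low end ($k=1$, where $H_0$ terms enter) and at $k = m-2$ (where $H_{m-2}$ of the sphere vanishes since $m-2 \ne m-1$, and $H_{m-3}$ of the sphere also vanishes unless $m-3 = m-1$, impossible); in both boundary cases the vanishing of $\tilde H_*(S^{m-1})$ outside degree $m-1$ still forces the isomorphism. It is also worth noting the edge case $m \le 2$, where the range $1 \le k \le m-2$ is empty and there is nothing to prove, and the case where $E$ is disconnected, which does not occur here since $E$ fibers over a connected $M$ with connected fibre $U$ (so the $H_0$ bookkeeping is harmless).

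The main obstacle — really the only subtle point — is the bookkeeping at the two ends of the range to confirm that no stray term of $\tilde H_*(S^{m-1})$ or $H_0$ obstructs the isomorphism; this amounts to checking that $\{k-1, k\} \cap \{m-1\} = \varnothing$ for $1 \le k \le m-2$, together with the standard fact that reduced Mayer–Vietoris isolates $H_0$ issues to degree $0$. Everything else is routine: the existence of the Euclidean chart around $z_1$ is immediate from $E$ being a smooth manifold, and the homotopy equivalence $A \setminus \{z_1\} \simeq S^{m-1}$ is standard. I would therefore present the argument compactly: set up $A$, $B$; quote~\eqref{eq:mv}; read off the isomorphism in the stated degree range.
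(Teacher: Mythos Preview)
Your argument is correct. The only delicate spot is $k=1$, where $H_{k-1}(A\cap B)=H_0(S^{m-1})=\Z$ does not vanish in unreduced homology; you address this by passing to reduced Mayer--Vietoris, and indeed the map $H_0(A\cap B)\to H_0(A)\oplus H_0(B)$ is injective (all three spaces are connected for $m\geq 3$), so the connecting map $H_1(E)\to H_0(A\cap B)$ is zero and the isomorphism $H_1(E^1)\cong H_1(E)$ still follows. Everything else is routine.

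The paper reaches the same conclusion by a closely related but slightly different packaging: instead of Mayer--Vietoris, it computes the \emph{local homology} $H_i(E,E^1)$ via excision (replacing $(E,E^1)$ by $(B,B\setminus\{z_1\})\simeq (B,S^{m-1})$ for a small ball $B$), obtains $H_i(E,E^1)=0$ for $1\leq i<m$, and then reads off $H_i(E^1)\cong H_i(E)$ from the long exact sequence of the pair $(E,E^1)$. Both arguments rest on the same geometric input---a punctured Euclidean chart retracts onto $S^{m-1}$---and have the same length; the pair/excision route has the mild advantage that the $H_0$ bookkeeping is absorbed into the vanishing of the relative groups, so no separate low-degree check is needed.
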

\begin{proof}
Recall that the local homology at $z_1$ of $E$ is defined as $H_i(E,E-\{z_1\})=H_i(E,E^1)$.
Now, a standard argument (excising the complement $B^c$ of an open ball $B$ containing $z_1$ using  the long exact sequence for the  pair $H_i(B,S^{m-1})$, which is equal to  $H_i(E,E^1)$ by the excision isomorphism)  shows that the local homology is 
$$
H_i(E,E^1)= 0 \mbox{ for } 1 \leq i <m.
$$
Now using the previous result in the long exact sequence for the pair $(E,E^1)$, we obtain

$$
H_{i+1}(E,E^1)=0 \xrightarrow{} H_i(E^1) \xrightarrow{}H_i(E) \xrightarrow{}0=H_i(E,E^1),
$$
for $1 \leq i \leq m-2$. Using the argument below~\eqref{eq:exactiso} yields the result. 
\end{proof}

As in the proof of Theorem~\ref{theorem:MaintheoremA}, we prove the case of weak and strong stabilization separately.

\mab{Obstruction to strong $1$-point global stabilization}

We prove that $E^1$ deformation retracts onto a point  only if $E$  is a nontrivial bundle. To this end, assume for contradiction that $E$ is trivial and  $E^1$ is contractible. In this case, $H_k(E^1)=0$ for $k \geq 1$.

We now consider three cases for the control space $U$. Either $\dim U \geq 2$, or $\dim U =1$, which can be split into $U=\R$ or $U=S^1$ since $U$ is  a connected manifold.  Recall that $\dim M = n$.

\xc{Case $\dim U \geq 2$:} Then, $m = \dim E \geq n+2$. Owing to Lemma~\ref{lem:eqhomominus}, we  have $H_{i}(E)=H_{i}(E^1)=0$ for $i=n-1,n$. But Lemma~\ref{lem:hnuorient} states that either $H_n(E)$ or $H_{n+1}(E)$is non-zero if $E$ is trivial, which yields the contradiction sought.

\xc{Case $U = \R$:} In this case,  $E=M \times \R$, which is homotopy equivalent to $M$. Hence, $H_{k}(E)\simeq H_k(M)$ and in particular $H_{m}(E)=0$. 

Let $B$ be an open ball containing $z_1$.	Recall the Mayer-Vietoris long exact sequence, where we write $E = E^1 \cup B$:
\begin{multline}\label{eq:mayviet2}
 \cdots \longrightarrow  H_{m}(E)  \longrightarrow H_{m-1}(E^1 \cap B)  \longrightarrow H_{m-1}(E^1) \oplus H_{m-1}(B)  
\longrightarrow \\ H_{m-1}(E)  \longrightarrow H_{m-2}(E^1 \cap B)  \longrightarrow \cdots
\end{multline}

To proceed, we record the following two observations. First, $E^1 \cap B \simeq S^{m-1}$, and $H_k(S^{m-1})$ is nonzero only for $k=m-1$ and $k=0$, in which cases it is $\Z$. Second, $H_k(B)=0$ for all $k >0$.

From the Mayer-Vietoris sequence~\eqref{eq:mayviet2} starting at $H_m(E)=0$, and using the above observations, we have
\begin{equation*}
0  \xrightarrow{\ell_1} \Z  \xrightarrow{\ell_2} H_{m-1}(E^1) \xrightarrow{\ell_3} H_{m-1}(E)  \xrightarrow{} 0
\end{equation*}
Since the sequence is exact, we conclude that $\ell_2$ is injective and thus $H_{m-1}(E^1) \neq 0$ and thus $E^1$ is not contractible.

\xc{Case $U = S^1$:} We have $E= M \times S^1$. From the K\"unneth exact sequence in degree 1, we obtain
\begin{multline*}
0 \rightarrow H_0(M)\otimes H_1(S^1)\oplus H_1(M)\otimes H_0(S^1) \to H_1(M \times S^1) \\ \to \operatorname{Tor}(H_0(M),H_0(S^1))\to 0
\end{multline*}
Since $H_0(M)=H_0(S^1)=\Z$, the torsion term vanish, and we get
\begin{equation}\label{eq:skunnethS1}0 \to \Z \oplus H_1(M) \to H_1(E) \to 0.	
\end{equation}
This shows that $H_1(E)\neq 0$. But $H_1(E^1)$  vanishes by assumption, so Lemma~\ref{lem:eqhomominus} yields the contradiction sought.

\mab{Obstruction to weak $1$-point global stabilization.}

We prove that $E^1$ deformation retracts onto a fibre  only if $E$  is a nontrivial bundle. 
Proceeding by contradiction, we assume that $E$ is trivial and $E^1$ deformation retracts onto a fibre. In particular, this implies that $H_i(E^1)\simeq H_i(U)$ for $i \geq 0$. We deal with the same three cases for the fibre $U$ as above:

\xc{Case $\dim U \geq 2$:} As above, we have   $H_{n}(E)=H_{n-1}(E)=0$. But Lemma~\ref{lem:hnuorient} states that if $E$ is trivial, either $H_n(E) \neq H_n(U)$ or $H_{n-1}(E)\neq H_{n-1}(U)$; this provides a contradicton.

\xc{Case $U = \R$:} In this case, deformation retraction of $E^1$ onto a fibre is equivalent to deformation retraction onto a point, and the proof is the same as above.
\xc{Case $U = S^1$:} If $M$ is moreover non-orientable, then from Lemma~\ref{lem:hnuorient} we have $H_{n-1}(E) \neq H_{n-1}(U)$ but  Lemma~\ref{lem:eqhomominus} says that $H_{n-1}(E)=H_{n-1}(E^1)$. Hence $E^1$ cannot deformation retract onto $U$.

Now assume that $M$ is orientable. We claim that $M$ is necessarily a {\em homology sphere}.
\begin{lemma}\label{lem:Mishomosphere}
	If $E^1=E-\{z_1\}$ is deformation retracts to $S^1$,  with $E=M \times S^1$ and $M$ orientable, then $M$ is an integral homology sphere. 
\end{lemma}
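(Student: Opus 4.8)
The plan is to compute the integral homology of $M$ directly, combining three ingredients: the K\"unneth decomposition of $H_*(E)=H_*(M\times S^1)$, the comparison between $H_*(E)$ and $H_*(E^1)$ furnished by Lemma~\ref{lem:eqhomominus}, and the homology of $E^1$, which is pinned down by the hypothesis. The first observation is that a fibre of the trivial bundle $M\times S^1\to M$ is literally a copy of $S^1$, so the assumed deformation retraction of $E^1$ onto a fibre is a homotopy equivalence $E^1\simeq S^1$, whence $H_k(E^1;\Z)\cong H_k(S^1;\Z)$, i.e. $\Z$ for $k=0,1$ and $0$ otherwise.

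Write $n=\dim M$ and $m=\dim E=n+1$. Since $H_*(S^1;\Z)$ is torsion-free, the $\operatorname{Tor}$ term in the K\"unneth sequence~\eqref{eq:kunneth} vanishes and it collapses to $H_k(M\times S^1;\Z)\cong H_k(M;\Z)\oplus H_{k-1}(M;\Z)$ for every $k$. On the other hand, Lemma~\ref{lem:eqhomominus} gives $H_k(E;\Z)=H_k(E^1;\Z)$ for $1\le k\le m-2=n-1$. Chaining these identities yields
$$
H_k(M;\Z)\oplus H_{k-1}(M;\Z)\;\cong\;H_k(S^1;\Z)\qquad\text{for }1\le k\le n-1 .
$$
Evaluating at $k=1$ and using $H_0(M;\Z)=\Z$ (as $M$ is connected) forces $H_1(M;\Z)=0$; evaluating at $k=2,\dots,n-1$ forces $H_k(M;\Z)=0$ throughout that range. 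Hence $H_k(M;\Z)=0$ for $1\le k\le n-1$. Finally $H_0(M;\Z)=\Z$, and since $M$ is a closed connected orientable $n$-manifold, $H_n(M;\Z)=\Z$ by Lemma~\ref{lem:onhn}; so $M$ has the integral homology of $S^n$, i.e. $M$ is an integral homology sphere.

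I do not expect a genuine obstacle: the argument is essentially bookkeeping over the range of degrees in which Lemma~\ref{lem:eqhomominus} applies. The only subtlety is the low-dimensional edge case $n=1$, for which that range is empty and the computation above says nothing; but in that case $M$ is a closed connected orientable $1$-manifold, hence $M=S^1$, which is already an integral homology sphere, so I would dispatch $n=1$ at the outset and run the main argument under the standing assumption $n\ge 2$.
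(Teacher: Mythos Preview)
Your proof is correct and follows the same overall route as the paper: combine the K\"unneth decomposition of $H_*(M\times S^1)$ with Lemma~\ref{lem:eqhomominus} and the hypothesis $H_*(E^1)\cong H_*(S^1)$ to pin down $H_k(M)$ for $1\le k\le n-1$. The one noteworthy difference is that you observe up front that $H_*(S^1;\Z)$ is free, so every $\operatorname{Tor}$ term in K\"unneth vanishes and $H_k(E)\cong H_k(M)\oplus H_{k-1}(M)$ holds immediately in all degrees; this lets you read off $H_k(M)=0$ directly, whereas the paper establishes the vanishing of the Tor terms inductively (using $H_{k-1}(M)=0$ from the previous step to kill the Tor in degree $k$), which is a slightly longer path to the same conclusion.
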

\begin{proof}
	Recall $\dim M = n$. From Lemma~\ref{lem:eqhomominus}, we have that $H_i(E^1)=H_i(E)$ for $0 \leq i \leq n-1$. The K\"unneth sequence for $E$ in degree 1 is given in~\eqref{eq:skunnethS1}, and shows that $$H_1(E) =\Z \oplus H_1(M).$$
	But since we assumed that $E^1$ deformation retracts onto $U=S^1$, then $H_1(E^1)=H_1(U) = \Z$. Whence using  Lemma~\ref{lem:eqhomominus}, $H_1(E) = \Z$ and thus $H_1(M)=0$.
	
	Next, the K\"unneth sequence for $E$ in degree 2 is
\begin{multline}
0 \xrightarrow{} [H_0(S^1) \otimes H_2(M)] \oplus [H_1(S^1) \otimes H_1(M)] \oplus [H_2(S^1) \otimes H_0(M)] \xrightarrow{} H_2(E) \\\xrightarrow{} 	\operatorname{Tor}(H_1(M),H_0(S^1)) \oplus \operatorname{Tor}(H_0(M),H_1(S^1))
\end{multline}
Since $H_1(M)=0$, and all other groups appearing in the torsion terms are $\Z$, the torsion terms vanish. The sequence then becomes
$$
0 \rightarrow H_2(M) \to H_2(E) \to 0.
$$
The above says that $H_2(M)=H_2(E)$ and using Lemma~\ref{lem:eqhomominus}, we get $H_2(M)=H_2(E^1)$ and by assumption $H_2(E^1)=0$. Now assuming by induction that $H_{i-1}(M)=0$ for $1 \leq i \leq k < n-1$, we have that 
$$\bigoplus_{i+j=k+1} H_i(M) \otimes H_j(U) = H_{k+1}(M) \otimes H_0(U) = H_{k+1}(M).
$$
and
$$\bigoplus_{i+j=k}\operatorname{Tor}(H_i(M),H_j(S^1)) = 0.
$$
The K\"unneth sequence in degree $k+1$ then yields that $H_{k+1}(M)=H_{k+1}(E)$ and Lemma~\ref{lem:eqhomominus} that $H_{k+1}(E)=H_{k+1}(E^1)$. This proves the claim that  $H_i(M)=0$ for $1 \leq i \leq n-1$. Since $H_n(M) = \Z$ by assumption, then $M$ is an integral homology sphere.

\end{proof}

The Mayer-Vietoris sequence~\eqref{eq:mayviet2}  yields

$$
H_{n+1}(E) \xrightarrow{} H_n(S^n) \xrightarrow{} H_n(E^1) \xrightarrow{\ell_1} H_n(E) \xrightarrow{\ell_2} H_{n-1}(S^n)
$$
Assume that $n > 1$; then $H_n(E^1)=H_n(U)=0$ and $H_{n-1}(S^n)=0$, yielding the exact sequence
$$
H_{n+1}(E) \xrightarrow{} \Z \xrightarrow{} 0 \xrightarrow{\ell_1} H_n(E) \xrightarrow{\ell_2} 0.
$$
By exactness,  $H_n(E)=0$. 
However, using Lemma~\ref{lem:Mishomosphere}, the K\"unneth sequence in degree $n$ for $E$ reduces to
$$ 0 \xrightarrow{} H_n(M) \xrightarrow{} H_n(E) \xrightarrow{}0
$$
and thus $H_n(E)=H_n(M)=\Z$, which contradicts our previous conclusion.

It remains to deal with the case $n=1$. In this case, since $M$ is a closed manifold, we have $M=S^1$ and $E^1 = S^1 \times S^1 -\{z_1\}$. It is easy to see that $E^1 \simeq S^1 \lor S^1$, a wedge   of two circles, which clearly does not retract onto $S^1$. This concludes the proof.

\section{Acknowledgments}
The first author thanks Alexandra Kjuchukova and Frederick Leve for helpful discussions and comments on the manuscript.
\printbibliography
\end{document}